\documentclass[11pt]{amsart}

\usepackage{amsmath,amssymb,latexsym,soul,cite,mathrsfs,amsfonts}

\usepackage{color,enumitem,graphicx}
\usepackage[colorlinks=true,urlcolor=blue,
citecolor=red,linkcolor=blue,linktocpage,pdfpagelabels,
bookmarksnumbered,bookmarksopen]{hyperref}
\usepackage[english]{babel}
\usepackage{comment}

\usepackage[left=2.9cm,right=2.9cm,top=2.8cm,bottom=2.8cm]{geometry}
\usepackage[hyperpageref]{backref}

\usepackage[colorinlistoftodos]{todonotes}
\makeatletter
\providecommand\@dotsep{5}
\def\listtodoname{List of Todos}
\def\listoftodos{\@starttoc{tdo}\listtodoname}
\makeatother

\usepackage{verbatim} % to using the command "\comment"

\numberwithin{equation}{section}
%\pagestyle{myheadings}
% \markboth{}{} \pretolerance=10000
%\def\lb{\lambda}
%\def\var{\varepsilon}
%\def\pil{\left<}
%\def\pir{\right>}

%\def\nd{\noindent}
%\def\thend{\rule{3mm}{3mm}}
%\def\mathbb Re{\mathbb{R}}

\newtheorem{theorem}{Theorem}[section]
\newtheorem{proposition}[theorem]{Proposition}
\newtheorem{lemma}[theorem]{Lemma}
\newtheorem{corollary}[theorem]{Corollary}
\newtheorem{remark}{Remark}
%------------------------------------------------------
%------------------------------------------------------
% solucion para tener el simbolo de funcion restricta a un conjunto
% usar como \restr{f}{A}
\newcommand\restr[2]{{% we make the whole thing an ordinary symbol
  \left.\kern-\nulldelimiterspace % automatically resize the bar with \right
  #1 % the function
  \vphantom{\big|} % pretend it's a little taller at normal size
  \right|_{#2} % this is the delimiter
  }}
%------------------------------------------------------
%------------------------------------------------------
%\newcommand{\fim}{\hfill\rule{2mm}{2mm}}
%\newcommand{\n}{{\noindent}}
%\newcommand{p}{\displaystyle}
%\newcommand{\mathbb R}{\mbox{${\rm{I\!R}}$}}
%\newcommand{\mathbb RN}{I\!\!R^N}
%\newcommand{\N}{\mbox{${\rm{I\!N}}$}}
%\newcommand{\mathbb R}{\mathrm{I\!R\!}}
%\newcommand{\mathcal N}{\mathrm{I\!N\!}}
%\newcommand{\C}{\mbox{${\rm{C\hspace{-1.8mm}\rule{0.3mm}{2.8mm}}}$}}
%\def\n{\hspace*{0em}}
%\newcommand{}{\displaystyle}
%\def\dis{\displaystyle}
%\def\theequation{\thesection.\arabic{equation}}
%\let\Section=\section

%\def\section{\setcounter{equation}{0}\Section}

%------------------------------------------------------
%------------------------------------------------------

%------------------------------------------------------
%------------------------------------------------------

\title[Fibering method for Schr\"odinger type systems]
{The fibering method approach for a \\  non-linear Schr\"odinger equation coupled \\ with the 
electromagnetic field
}

%\maketitle{}
\author[G. Siciliano]{Gaetano Siciliano}

\author[K. Silva]{ Kaye Silva}

\address[G. Siciliano]{\newline\indent
	Departamento de Matem\'atica - Instituto de Matem\'atica e Estat\'istica
	\newline\indent 
	Universidade de S\~ao Paulo
	\newline\indent
	Rua do Mat\~ao 1010,  05508-090  S\~ao Paulo, SP,  Brazil}
\email{\href{mailto:sicilian@ime.usp.br}{sicilian@ime.usp.br}}

\address[K. Silva]{\newline\indent
	Instituto de Matem\'atica e Estat\'istica.   
	\newline\indent 
	Universidade Federal de Goi\'as,
	\newline\indent
	74001-970, Goi\^ania, GO, Brazil}
\email{\href{mailto:kayeoliveira@hotmail.com}{kayeoliveira@hotmail.com}}

\thanks{
Gaetano Siciliano was partially supported by Fapesp, CNPq and Capes, Brazil. 
}
%\subjclass[2010]{

%58E30.   %Variational principles
%}

\subjclass[2010]{Primary  
35A02, % Uniqueness problems: global uniqueness, local uniqueness, non-uniqueness
35J50, % 	Variational methods for elliptic systems
35J91, %	Semilinear elliptic equations with Laplacian, bi-Laplacian or poly-Laplacian
35Q60, % PDEs in connection with optics and electromagnetic theory
}
%\date{\today}
\keywords{Schr\"odinger-Poisson type system, variational methods, fibering methods,
Nehari manifold}

\pretolerance10000

%------------------------------------------------------
%------------------------------------------------------
\begin{document}

\begin{abstract}
We study, with respect to the parameter $q\neq0$, the following  Schr\"odinger-Bopp-Podolsky system
in $\mathbb R^{3}$
\begin{equation*}
\left\{
\begin{aligned}
-&\Delta u+\omega u+q^2\phi u=|u|^{p-2}u, \\
&-\Delta \phi+a^2\Delta^2 \phi = 4\pi u^2,
\end{aligned}
\right.
\end{equation*}
where $p\in(2,3], \omega>0, a\geq0$ are fixed.
We prove, by means of the fibering approach, that the system
has no solutions {\sl at all} for large values of $q's$,
and has two {\sl radial } solutions for small $q's$. We give also 
qualitative properties about  the energy level of the solutions and 
 a variational characterization of these  extremals values of $q$.
Our results  recover and improve some results in \cite{GaeDa, Ruiz}.
\end{abstract}

\maketitle

\section{Introduction}

In the recent paper \cite{GaeDa} for the first time in the mathematical
literature the following system in $\mathbb R^{3}$ has been studied
\begin{equation}\label{p}
\left\{
\begin{aligned}
-&\Delta u+\omega u+q^2\phi u=|u|^{p-2}u, \\
&-\Delta \phi+a^2\Delta^2 \phi = 4\pi u^2,
\end{aligned}
\right.
\end{equation}
where $a,\omega> 0$, $q\neq0$ and $p\in (2,6)$. The system appears when one\ look for stationary
solutions $u(x)e^{i\omega t}$ of the Schr\"odinger equation coupled with the Bopp-Podolski 
Lagrangian of the electromagnetic field, in the purely electrostatic situation.
Here $u$ represents the modulus of the wave function and $\phi$ the electrostatic potential.
From a physical point of view, the parameter $q$ has the meaning of the electric charge and $a$ is the parameter of
the Bopp-Podolski term.

In the cited paper, it has been shown that the problem can be addressed variationally. Indeed introducing the Hilbert space  $$\mathcal{D}:=\left\{\phi\in {D}^{1,2}(\mathbb{R}^3):\ \Delta \phi\in L^2(\mathbb{R}^3)\right\}$$
normed by $$\|\phi\|^{2}_{\mathcal D} = a^{2}\|\Delta \phi\|_{2}^{2} + \|\nabla \phi \|_{2}^{2},$$
it can be proved that % the test functions are dense, 
for every $u\in H^{1}(\mathbb R^{3})$ there is a unique solution $\phi_{u}\in \mathcal D$ of the second equation
in the system, that is satisfying
	\begin{equation}\label{solu}
-\Delta\phi_u+	a^2\Delta^2\phi_u=4\pi u^2.
	\end{equation}
Moreover it turns out that
$$\phi_{u} = \frac{1 - e^{-|\cdot|/a}}{|\cdot|} *u^{2}.$$
Observe from \eqref{solu} that, for every $ u\in H^{1}(\mathbb R^{3})$,
\begin{equation*}%\label{norm1}
4\pi \int \phi_u u^2=\|\phi_u\|^2_{\mathcal{D}},
\end{equation*}
which will be used throughout the paper.

By using the classical by now {\sl reduction argument }
one is led to study, equivalently,  the single equation
\begin{equation}\label{eq:equacao}
-\Delta u+\omega u+q^2\phi_{u} u=|u|^{p-2}u \quad \text{ in } \ \mathbb R^{3}
\end{equation}
containing the  nonlocal term $\phi_{u} u$.
Then whenever from now on we speak of solution of the system \eqref{p} we mean just the solution $u$
of the above equation since $\phi=\phi_{u}$ is univocally determined.
To the equation \eqref{eq:equacao} is related the energy functional
\begin{equation*}%\label{eq:J}
\mathcal{J}_q(u)=\frac{1}{2}\|u\|^2+\frac{q^2}{4}\int \phi_u u^2-\frac{1}{p}\|u\|_p^p, \quad u\in H^{1}(\mathbb R^{3})
\end{equation*}
which is well defined and $C^{1}$. In this way we are simply reduced to find critical points of $\mathcal J_{q}$.
%The reader can find all these details in \cite{GaeDa}.
We are denoting (here and throughout the paper) by $\|u \|_{p}$ the $L^{p}-$norm and by 
$$\|u \|^{2} =\|\nabla u\|^{2} + \omega\|u\|_{2}^{2}$$
the (squared) norm in $H^{1}(\mathbb R^{3})$, being $\omega$ a fixed positive constant.

\medskip

In \cite[Theorem 1.1 and Theorem 1.2]{GaeDa} it is proved that
if $p\in (3,6)$, then problem \eqref{p} admits a solution for every $q\neq0$.  On the other hand, if $p\in(2,3]$
the existence of a solution is proved just for $q\neq0$ sufficiently small. 
As we can see, there is a difference in the result depending on the range where $p$ varies.
Indeed in the case of $p's$ ``small'' the value of $q$ may prevent the existence of critical points 
for the functional $\mathcal J_{q}$.

\medskip

Of course, if $a=0$  system \eqref{p} reduces to the so called Schr\"odinger-Poisson system
in $\mathbb R^{3}$
\begin{equation}\label{eq:sp}
\left\{
\begin{aligned}
-&\Delta u+\omega u+q^2\phi u=|u|^{p-2}u, \\
&-\Delta \phi = 4\pi u^2
\end{aligned}
\right.
\end{equation}
or, equivalently, to
$$-\Delta u+\omega u+q^2\phi_{u}^{\textrm SP} u=|u|^{p-2}u, $$
where  now
$$\phi_{u}^{\textrm SP} = \frac{1}{|\cdot|}* u^{2}%\in D^{1,2}(\mathbb R^{3})
.$$
In the mathematical literature there is a huge number of papers concerning this last problem.
However we cite here just \cite{BF} where the authors for the first time introduced
the {\sl reduction method} which allows to study a single equation
instead of a system, and \cite{Ruiz} where the author studies the problem depending on the parameter
$q^{2}$. In particular Ruiz in \cite{Ruiz}, among other results, shows that, in the case $p\in(2,3)$
 the system \eqref{eq:sp} has two  radial solutions for small $q^{2}$ and has no solutions 
 at all, that is radial or not, for $q^{2}\ge 1/4$. See also \cite{RS} for similar results related to the problem in bounded domains.

\bigskip

Motivated by the cited papers \cite{GaeDa,Ruiz}, our aim here is to understand in a more satisfactory way
the existence of solutions for \eqref{p}, or \eqref{eq:equacao}, namely
the behaviour of $\mathcal J_{q}$ for what concerns its critical points in the case $p\in(2,3]$
and how they are influenced by the value of  $q$.

We prove two type of results.
The first one concerns with the smallness of $q^{2}$ as a necessary condition
in order to have a nontrivial solution of the problem (the sufficiency being proved in \cite{GaeDa}).
Indeed we show that for $q^{2}$ suitably large the problem has no solutions at all.
See Theorem \ref{th:1} below.

The second result concerns the existence of solutions for $q^{2}$ small. 
However, due to the technique used (we borrow some ideas from \cite{Ruiz}),
 we are able to state such a result just in the radial
case: in this case obtaining two solutions 
(in spite of the result in \cite{GaeDa} which states the existence of one solution in the nonradial case).
See Theorem \ref{th:2} below

\medskip

Before stating rigorously the results, we
observe that in the problem is appearing the positive parameter $q^{2}$. 
In view of this, the results are  stated and proved
for simplicity just for $q>0$, being understood that they are valid by changing $q$ with $|q|$.
%By a solution of \eqref{p} we mean a critical point of the $C^1$ functional $\mathcal{J}_q$. 
More specifically, under the assumption $p\in(2, 3]$ our main results in this work are the following
\begin{theorem}\label{th:1}
There exists $q^{*}>0$ such that, for every $q>q^{*}$ the problem admits only the trivial solution.
\end{theorem}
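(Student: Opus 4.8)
\emph{Strategy.} The plan is to extract from the variational structure two independent necessary conditions that a nontrivial solution must satisfy, and to show they become incompatible once $q$ is large. In the spirit of the fibering method I would test the functional along the two-parameter family $u_{t,\lambda}(x):=t\,u(x/\lambda)$, $t,\lambda>0$, and set $\Phi(t,\lambda):=\mathcal J_q(u_{t,\lambda})$. Since $\phi_{tu}=t^{2}\phi_{u}$, the map $\Phi$ is explicit in $(t,\lambda)$, and if $u\neq0$ is a solution then $(1,1)$ is a critical point of $\Phi$, so $\partial_{t}\Phi(1,1)=\partial_{\lambda}\Phi(1,1)=0$. The first equation is the Nehari identity
\[
\|u\|^{2}+q^{2}\int\phi_{u}u^{2}=\|u\|_{p}^{p},
\]
while the second is a Pohozaev identity. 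Differentiating $\int\phi_{u_{1,\lambda}}u_{1,\lambda}^{2}$ at $\lambda=1$ gives its Bopp--Podolsky version
\[
\tfrac12\|\nabla u\|_{2}^{2}+\tfrac{3\omega}{2}\|u\|_{2}^{2}
+\tfrac{q^{2}}{4}\Big(5\int\phi_{u}u^{2}+\tfrac1a\,E(u)\Big)=\tfrac{3}{p}\|u\|_{p}^{p},
\]
where $E(u):=\iint e^{-|x-y|/a}u^{2}(x)u^{2}(y)\,dx\,dy\ge0$ is the non-homogeneity correction that is absent in the Schr\"odinger--Poisson case $a=0$.

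\emph{Combining the identities.} Eliminating $\|u\|_{p}^{p}$ between the two identities I obtain, for every nontrivial solution,
\[
\Big(\tfrac3p-\tfrac12\Big)\|\nabla u\|_{2}^{2}-\omega\Big(\tfrac32-\tfrac3p\Big)\|u\|_{2}^{2}
=q^{2}\Big[\tfrac{E(u)}{4a}+\Big(\tfrac54-\tfrac3p\Big)\int\phi_{u}u^{2}\Big].
\]
Since $p\in(2,3]$ forces $\tfrac3p-\tfrac12>0$ and $\tfrac32-\tfrac3p\ge0$, and the bracket on the right is nonnegative (at least for $p\ge\tfrac{12}{5}$, the remaining range being handled by also invoking the Nehari identity), this can be solved for $q^{2}$ as a quotient of the solution's own quadratic quantities,
\[
q^{2}=\frac{\big(\tfrac3p-\tfrac12\big)\|\nabla u\|_{2}^{2}-\omega\big(\tfrac32-\tfrac3p\big)\|u\|_{2}^{2}}
{\dfrac{E(u)}{4a}+\big(\tfrac54-\tfrac3p\big)\displaystyle\int\phi_{u}u^{2}}.
\]
I would then \emph{define} $q^{*}$ as the supremum of the right-hand side over all admissible $u$ (equivalently, via the best constant in the associated scale-invariant inequality): if this supremum is finite, then no nontrivial solution can exist for $q>q^{*}$, which is exactly the claim, and $q^{*}$ acquires the announced variational characterization.

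\emph{The main obstacle.} The crux is precisely the finiteness of that supremum, equivalently a sharp scale-invariant inequality controlling the numerator by the denominator. The quotient is invariant under translations and under the dilations $u\mapsto su$, so these symmetries are harmless; the genuine enemy is \emph{concentration} ($u(\cdot/\lambda)$ with $\lambda\to0$), along which the gradient term in the numerator decays far more slowly than the nonlocal self-energies in the denominator. This is exactly why $a>0$ is harder than $a=0$: the Bopp--Podolsky self-energy of a concentrated profile is weaker than the Coulombic one; the saving grace is that the correction $E(u)\ge0$ only enlarges the denominator and thus works in our favour. I would therefore control the concentration regime by a concentration--compactness analysis of a maximizing sequence for the quotient, or, in contrapositive form, by a rescaling argument showing that solutions along $q\to\infty$ would have to concentrate and, after rescaling, approach a nontrivial finite-energy solution of a limit problem that the subcritical exponent $p<6$ excludes by Pohozaev. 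Carrying out this compactness step rigorously, and thereby pinning down the sharp constant that defines $q^{*}$, is the technical heart of the proof.
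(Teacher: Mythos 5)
Your proposal takes a genuinely different route from the paper, so let me first record the difference. The paper's proof (Proposition \ref{fiberingvariation}, Corollary \ref{nehari}, Lemma \ref{extremalparamater}) uses only the one-parameter fiber maps $\psi_{q,u}(t)=\mathcal{J}_q(tu)$ and deliberately avoids any Pohozaev identity: solving $\psi_{q,u}'=\psi_{q,u}''=0$ yields the explicit threshold $q(u)$ in \eqref{extremalfun}, and for $q>q(u)$ the map $\psi_{q,u}$ is strictly increasing, so $t=1$ cannot be a critical point of $\psi_{q,u}$ and hence $u$ cannot be a critical point of $\mathcal{J}_q$. Nonexistence for $q>q^*=\sup_u q(u)$ follows at once, and the whole weight of the theorem is carried by the finiteness of $q^*$, which the paper deduces from interpolation together with Proposition \ref{ine1}. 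Note that this argument never invokes an identity satisfied by solutions, so no regularity or decay theory is needed. Your scheme instead derives two necessary identities for a solution (Nehari, plus the Bopp--Podolsky Pohozaev identity with the correction $E(u)/a$, both of which you compute correctly) and attempts to bound $q^{2}$ by the resulting quotient.

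The genuine gap is the step you yourself call the technical heart, and it is not merely unfinished: as formulated it cannot be closed, because the supremum you propose to call $q^{*}$ is infinite. Fix $u\in H^1(\mathbb{R}^3)\setminus\{0\}$ and set $u_\lambda=u(\cdot/\lambda)$. Changing variables in the nonlocal term,
\begin{equation*}
\int\phi_{u_\lambda}u_\lambda^2
=\lambda^6\iint \frac{1-e^{-\lambda|x-y|/a}}{\lambda|x-y|}\,u^2(x)u^2(y)\,dx\,dy
=\frac{\lambda^6}{a}\bigl(\|u\|_2^4+o(1)\bigr)
\end{equation*}
and similarly $E(u_\lambda)=\lambda^6\bigl(\|u\|_2^4+o(1)\bigr)$ as $\lambda\to0$, because $(1-e^{-s})/s\le 1$ and $(1-e^{-s})/s\to 1$ as $s\to0$. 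Hence your denominator equals $\frac{\lambda^6}{a}\|u\|_2^4\bigl(\tfrac32-\tfrac3p+o(1)\bigr)$, which is positive since $p>2$, while your numerator equals $\bigl(\tfrac3p-\tfrac12\bigr)\lambda\|\nabla u\|_2^2\,(1+o(1))$. The quotient therefore blows up like $\lambda^{-5}$ along concentration: the supremum over all admissible $u$ is $+\infty$, and no concentration--compactness analysis of a maximizing sequence can extract a finite $q^{*}$ from it, because there is nothing to maximize. The only repair is to restrict the supremum to actual solutions, but then its finiteness is equivalent to a uniform a priori bound on solution families as $q\to\infty$ (your alternative blow-up scenario), which is not carried out and is essentially the content of the theorem itself. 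Two smaller unresolved points: for $p\in(2,\tfrac{12}{5})$ the sign of your denominator for general $u$ is unclear and the promised fix by ``invoking the Nehari identity'' is not substantiated; and the Pohozaev identity must itself be justified for finite-energy solutions of the nonlocal equation. Finally, your diagnosis of concentration as the real enemy is correct in a stronger sense than you state: under the same scaling the right-hand side $\frac{1}{\pi}\|\phi_{u_\lambda}\|_{\mathcal{D}}\|\nabla u_\lambda\|_2$ of the paper's Proposition \ref{ine1} is of order $\lambda^{7/2}$, while its left-hand side equals $\lambda^{3}\|u\|_3^3$; so the concentration regime is exactly where any proof along either route --- including the paper's --- must be examined with care.
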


\begin{theorem}\label{th:2}
There exist $\varepsilon>0,q_0^{*}>0$ satisfying $q_{0}^{*}+\varepsilon<q^{*}$
(with $q^{*}$ given in Theorem \ref{th:1})
such that, for every   $q\in (0,q_0^*+\varepsilon)$ the problem
has two radial solutions.
\end{theorem}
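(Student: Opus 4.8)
The plan is to produce both solutions as constrained minimisers obtained from the fibering analysis of $\mathcal{J}_q$, carried out in the radial subspace $H^1_r(\mathbb{R}^3)$ to restore compactness. For $u\in H^1_r(\mathbb{R}^3)\setminus\{0\}$ and $t>0$ set $\psi_u(t):=\mathcal{J}_q(tu)$. Since the second equation is linear in its source, $\phi_{tu}=t^2\phi_u$, and therefore
$$\psi_u(t)=\frac{t^2}{2}\|u\|^2-\frac{t^p}{p}\|u\|_p^p+\frac{q^2t^4}{4}\int\phi_uu^2,$$
so that $\psi_u'(t)=t\,g_u(t)$ with $g_u(t):=\|u\|^2-t^{p-2}\|u\|_p^p+q^2t^2\int\phi_uu^2$. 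Because $2<p\le 3<4$, the function $g_u$ is positive at $0^+$ and at $+\infty$ and has a single interior minimum; hence $\psi_u$ has either no positive critical point or exactly two, $0<t_1(u)<t_2(u)$, a local maximum at $t_1(u)$ and a local minimum at $t_2(u)$. The first task is to make this dichotomy quantitative: imposing $g_u=g_u'=0$ and using the zero-homogeneity of the resulting expression, I obtain a threshold $Q(u)$ such that the two critical points exist exactly when $q<Q(u)$. These points fill the two sheets of the Nehari set $\mathcal{N}_q:=\{u\ne0:\langle\mathcal{J}_q'(u),u\rangle=0\}$, namely $\mathcal{N}_q^-$ where $\psi_u''<0$ and $\mathcal{N}_q^+$ where $\psi_u''>0$, separated by the sign of $q^2\int\phi_uu^2-\tfrac{p-2}{4-p}\|u\|^2$.

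Second, I fix the extremal parameters through $Q$. Any nonzero critical point of $\mathcal{J}_q$ lies on a ray carrying a critical point of $\psi_u$, so with $q^*:=\sup\{Q(u):u\ne0\}$ no solution can exist for $q>q^*$; this is the variational content of Theorem \ref{th:1}. For the existence range I let $q_0^*$ and the slack $\varepsilon$ be dictated by the radial threshold, chosen so that for $q<q_0^*+\varepsilon$ the set $\{u\in H^1_r:q<Q(u)\}$ is rich enough that $\mathcal{N}_q^\pm$ are both nonempty, the degenerate sheet $\mathcal{N}_q^0$ is empty — so that each $\mathcal{N}_q^\pm$ is a natural $C^1$ constraint on which the Lagrange multiplier of a constrained critical point must vanish — and the strict gap $q_0^*+\varepsilon<q^*$ holds, the latter amounting to the separation between an infimum and a supremum of $Q$. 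Throughout I work in $H^1_r(\mathbb{R}^3)$, where the Strauss embedding $H^1_r(\mathbb{R}^3)\hookrightarrow L^s(\mathbb{R}^3)$ is compact for $2<s<6$; this makes both $u\mapsto\|u\|_p^p$ and the nonlocal term $u\mapsto\int\phi_uu^2$ weakly continuous, and Palais' symmetric criticality principle guarantees that radial critical points solve the full problem.

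Third, the higher solution comes from minimising over $\mathcal{N}_q^-$. On all of $\mathcal{N}_q$ one has the identity $\mathcal{J}_q(u)=\tfrac14\|u\|^2-\tfrac{4-p}{4p}\|u\|_p^p$, which on $\mathcal{N}_q^-$ combines with the defining inequality to give $\mathcal{J}_q(u)\ge\tfrac{p-2}{4p}\|u\|^2$; since the Nehari relation together with Sobolev's inequality keeps $\|u\|$ bounded away from $0$, the level $c_q^-:=\inf_{\mathcal{N}_q^-}\mathcal{J}_q$ is strictly positive. A radial minimising sequence is bounded, hence by the compactness above converges, along a subsequence, to a minimiser lying on $\mathcal{N}_q^-$; emptiness of $\mathcal{N}_q^0$ upgrades it to a free critical point of $\mathcal{J}_q$, which is the first solution.

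Finally, the lower solution is sought as a minimiser of $\mathcal{J}_q$ over $\mathcal{N}_q^+$, yielding a level $c_q^+<c_q^-$ that I expect to be non-positive, and here is where I anticipate the main difficulty. On $\mathcal{N}_q^+$ the fibre minima sit at large amplitude, where the negative term $-\tfrac{4-p}{4p}\|u\|_p^p$ competes with the nonlocal energy, so the genuine obstacle is to show that $\mathcal{J}_q$ is bounded below on this sheet and that minimising sequences do not spread to infinity; it is precisely here that the restriction $p\le 3$ and the smallness of $q$ must be used to let the quartic nonlocal term control the $L^p$ term, and that the radial confinement of the admissible directions $\{u:Q(u)>q\}$ prevents the loss of compactness that would otherwise send the infimum to $-\infty$. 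Once boundedness below and radial compactness are secured, the same multiplier argument as before produces a second critical point on $\mathcal{N}_q^+$, distinct from the first because $c_q^+<c_q^-$, completing the two-solution statement; calibrating $q_0^*$ and $\varepsilon$ so that this entire scheme survives on the whole interval $(0,q_0^*+\varepsilon)$ while preserving $q_0^*+\varepsilon<q^*$ is the remaining bookkeeping.
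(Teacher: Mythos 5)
Your fibering setup, the identity $\mathcal J_q=\tfrac14\|u\|^2-\tfrac{4-p}{4p}\|u\|_p^p$ on $\mathcal N_q$, the bound $\mathcal J_q>\tfrac{p-2}{4p}\|u\|^2$ on $\mathcal N_q^-$, and the nonexistence mechanism for $q>q^*$ all agree with the paper. But there are two genuine gaps, and they sit exactly where the paper has to work hardest.

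First, your scheme hinges on choosing $q_0^*$ and $\varepsilon$ so that $\mathcal N_q^0$ is \emph{empty} for all $q<q_0^*+\varepsilon$, and this cannot be done. The threshold $Q(u)$ (the paper's $q(u)$ in \eqref{extremalfun}) is continuous and $0$-homogeneous on the connected set $H^1_r(\mathbb R^3)\setminus\{0\}$, so its range is an interval; moreover dilation $u_\lambda=u(\cdot/\lambda)$ gives $Q(u_\lambda)\to 0$ as $\lambda\to\infty$. Hence every value $q\in(0,\sup_{H^1_r}Q)$ is attained, i.e.\ $\mathcal N_q^0\cap H^1_r\neq\emptyset$ for \emph{every} relevant $q$, not just for $q$ close to $q^*$. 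The paper never empties $\mathcal N_q^0$; instead it proves the uniform lower bound $\mathcal J_q\ge m>0$ on $\mathcal N_q^0$ (Proposition \ref{N0ene}) and keeps the minimization level below $\delta<\min\{m,M\}$, so that minimizing sequences stay in $\mathcal N_q^+$. That device protects only the \emph{low} level: by the same computation your level $c_q^-=\inf_{\mathcal N_q^-}\mathcal J_q$ satisfies $c_q^-\ge m$, which is comparable to $\inf_{\mathcal N_q^0}\mathcal J_q$, so you have no mechanism preventing a minimizing sequence on $\mathcal N_q^-$ from degenerating onto $\mathcal N_q^0$, where the natural-constraint/Lagrange argument (Proposition \ref{minnehari}) breaks down and the limit need not be a critical point. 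This is precisely why the paper obtains the second solution not by minimizing over $\mathcal N_q^-$ but via the Mountain Pass Theorem (Propositions \ref{MPG} and \ref{MPCP}), combined with the strong compactness property that \emph{any} radial sequence with $\mathcal J_q'(u_n)\to0$ has a convergent subsequence (Proposition \ref{MPS}\,(\ref{iiMPS})).

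Second, you never define $q_0^*$, and your expectation that the low level $c_q^+$ is non-positive is exactly what fails on part of the asserted interval. The paper's $q_0^*$ is the supremum of the zero-energy thresholds $q_0(u)$ in \eqref{zeroenergy}; for $q\ge q_0^*$ the functional is non-negative everywhere (Corollary \ref{zeroenergylevel}), so the interesting content of the theorem is the window $[q_0^*,q_0^*+\varepsilon)$ — without it one could simply take the interval $(0,q_0^*)$ and the slack $\varepsilon$ would play no role. Handling that window requires the steps your proposal omits: producing a nontrivial global minimizer at level $0$ for $q=q_0^*$ as a limit of minimizers $u_{q_n}$, $q_n\uparrow q_0^*$, kept away from zero by the uniform Nehari bound \eqref{eq:bddaway} (Corollary \ref{Mq_0}); showing $q_0^*<q(u_{q_0^*})$ so that this function still generates points of $\mathcal N_q^+$ with small energy $\widehat{\mathcal J}_q<\delta$ for $q$ slightly above $q_0^*$ (Proposition \ref{enerynear}); and then running a local minimization at positive level plus a Mountain Pass between $0$ and that local minimizer. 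Your remaining concern (boundedness below of $\mathcal J_q$ on $\mathcal N_q^+$) is real but is the part the paper settles cleanly, for all $q>0$, by the inequality of Proposition \ref{ine1} together with the Strauss radial decay (Proposition \ref{MPS}\,(\ref{iMPS})); the parts you treat as bookkeeping are where the actual proof lives.
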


Few comments on these results are in order.

As we already said, we are reduced to find critical points of $\mathcal J_{q}$.
We remark explicitly  that no Pohozaev identity is involved in proving the nonexistence result in Theorem \ref{th:1}:
it just follows due to the properties of the fibering map.

To prove  Theorem \ref{th:2} we will use the Mountain Pass Theorem
on the space of radial functions, that is in $H^{1}_{r}(\mathbb R^{3})$.
We take advantage of the smallness of $q$ to prove that the energy functional has 
the Mountain Pass Geometry. However, in contrast to \cite{GaeDa} where the
condition of $q$ small was used in order to find a function where $\mathcal J_{q}$
is negative (and then apply the Mountain Pass Theorem in a standard way), the value $q_{0}^{*}$ we find here is a threshold: for $q<q_{0}^{*}$
there is  a function in $H^{1}_{r}(\mathbb R^{3})$ where the functional is negative, while for $q\geq q_{0}^{*}$
the functional is non-negative. Hence  the argument employed in both papers \cite{GaeDa,Ruiz} do not work
for  $q> q_0^*$, nevertheless also in this case we exhibit here a Mountain Pass structure.

As shown in \cite{GaeDa}, the solutions we find in Theorem \ref{th:2} are classical
and positive by  the Maximum Principle.

Our results holds for any fixed $a\geq0$.
We also notice that, in case $a=0$
we do not need Lemma \ref{regu} and the inequality in  Proposition \ref{ine1}
just follows (up to some positive constant)
by multiplying the second equation in \eqref{eq:sp} by $|u|$ and integrating, which is the relation used in \cite{Ruiz}.
In this sense our result recovers the one in \cite{Ruiz} and  even a better understanding of the fiber 
maps is given here, since the Mountain Pass structure for the functional related to \eqref{eq:sp}
holds although the functional is  non-negative for $q\geq q_{0}^{*}$.

Actually  we deduce Theorem \ref{th:1}
and Theorem \ref{th:2} as consequences of the following result which gives
additional information on the solutions.
\begin{theorem}\label{T1} 
%There exist $q^*>q_0^*>0$ and $\varepsilon>0$ such that for each $q\in (0,q_0^*+\varepsilon)$ 
Let $p\in (2,3], a\geq0$ be fixed. There exist positive numbers $\varepsilon, q_{0}, q_{0}^{*}$ satisfying $q_{0}^{*}+\varepsilon<q^{*}$ such that:
\begin{enumerate}
\item[1.] for each $q>q^{*}$ the functional $\mathcal J_{q}$ has no critical points in $H^{1}(\mathbb R^{3})$
other than the zero function; \medskip
\item[2.]for each $q\in (0,q_0^*+\varepsilon)$ the functional $\mathcal J_{q}$ has two nontrivial critical points $u_{q},w_{q}\in H^{1}_{r}(\mathbb R^{3})$ where $w_{q}$ is a Mountain Pass critical point with
 $$\mathcal{J}_q(w_q)>\max\{ 0, \mathcal{J}_q(u_q)\}.
 %\quad \text{ and }\quad \mathcal{J}_q(w_q)>\mathcal{J}_q(u_q).
 $$
More specifically, \medskip
	\begin{enumerate}[label=(\roman*),ref=\roman*]
		\item\label{i1T1} if $q\in (0,q_0^*]$, then $u_q$ is a global minimum, with
$$\mathcal{J}_q(u_q)<0\ \text{ if }\  q\in (0,q_0^*), \quad \mathcal J_{q_{0}^{*}}(u_{q_{0}}^{*})=0;$$
				\item\label{i2T1} if $q\in (q_0^*,q_0^*+\varepsilon)$, then $u_q$ is a local minimum
				with
				$$\mathcal{J}_q(u_q)>0.$$
				% \medskip
		%\item\label{i3T1} $\mathcal{J}_q(u_q)<0$ if $q\in (0,q_0^*)$, $\mathcal{J}_q(u_{q_0^*})=0$ and $
		%\mathcal{J}_q(u_q)>0$ if $q\in (q_0^*,q_0^*+\varepsilon)$, 
		%while $\mathcal{J}_q(w_q)>0$ and $\mathcal{J}_q(w_q)>\mathcal{J}_q(u_q)$ for each $q\in (0,q_0^*+
		%\varepsilon)$. 
		%\item\label{i4T1} If $q>q^*$, then the functional $\mathcal{J}_q$ has no 
		%critical points other than the zero function.
	\end{enumerate}
\end{enumerate}
\end{theorem}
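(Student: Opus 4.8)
The plan is to reduce the whole analysis to the one–dimensional \emph{fibering maps}. For $u\in H^{1}(\mathbb R^{3})\setminus\{0\}$ and $t>0$ set $\psi_{u}(t):=\mathcal J_{q}(tu)$. Since \eqref{solu} is linear with right–hand side quadratic in $u$, one has $\phi_{tu}=t^{2}\phi_{u}$, whence
\begin{equation*}
\psi_{u}(t)=\frac{t^{2}}{2}\|u\|^{2}+\frac{q^{2}t^{4}}{4}\int\phi_{u}u^{2}-\frac{t^{p}}{p}\|u\|_{p}^{p}.
\end{equation*}
Writing $A=\|u\|^{2}$, $B=\int\phi_{u}u^{2}$, $C=\|u\|_{p}^{p}$ (all positive), we get $\psi_{u}'(t)=t\,h_{u}(t)$ with $h_{u}(t)=A+q^{2}Bt^{2}-Ct^{p-2}$. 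As $p\in(2,3]$ forces $0<p-2\le1<2$, the function $h_{u}$ has $h_{u}(0^{+})=A>0$, $h_{u}(+\infty)=+\infty$ and a unique interior minimum at $t_{*}=\left((p-2)C/2q^{2}B\right)^{1/(4-p)}$, where
\begin{equation*}
h_{u}(t_{*})=A-\frac{4-p}{2}\,C\,t_{*}^{p-2}.
\end{equation*}
Everything is governed by the sign of $h_{u}(t_{*})$: if it is positive then $\psi_{u}$ is strictly increasing, so $tu$ is never critical; if it is negative then $\psi_{u}$ has exactly two positive critical points $t_{1}(u)<t_{2}(u)$, a local maximum and a local minimum. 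The mechanism behind the fibering approach is that any nonzero critical point $u$ of $\mathcal J_{q}$ satisfies $\psi_{u}'(1)=\langle\mathcal J_{q}'(u),u\rangle=0$, i.e. $t=1$ is a positive critical point of $\psi_{u}$.

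For Part 1 I turn $h_{u}(t_{*})\ge0$ into a bound on $q$. Substituting $t_{*}$, the condition $h_{u}(t_{*})<0$ reads $q^{2}<\Lambda(u)$ with
\begin{equation*}
\Lambda(u)=\frac{1}{B}\left[\frac{4-p}{2}\left(\frac{p-2}{2}\right)^{\frac{p-2}{4-p}}\right]^{\frac{4-p}{p-2}}\frac{C^{2/(p-2)}}{A^{(4-p)/(p-2)}}.
\end{equation*}
The crucial fact is that $\Lambda$ is bounded from above on $H^{1}(\mathbb R^{3})\setminus\{0\}$. Indeed Proposition \ref{ine1} (which for $a>0$ rests on the regularity Lemma \ref{regu}) gives a constant $\kappa>0$ with $B\ge\kappa\|u\|_{3}^{6}/\|u\|^{2}$, so $1/B\le\|u\|^{2}/(\kappa\|u\|_{3}^{6})$; plugging this in one is left to bound $\|u\|_{p}^{2p/(p-2)}$ by $\|u\|_{3}^{6}\|u\|^{4(3-p)/(p-2)}$, which is exactly the interpolation inequality $\|u\|_{p}\le\|u\|_{2}^{\theta}\|u\|_{3}^{1-\theta}$, $\theta=2(3-p)/p$, together with $\|u\|_{2}\le\omega^{-1/2}\|u\|$. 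Hence $(q^{*})^{2}:=\sup_{u\ne0}\Lambda(u)<\infty$, and for $q>q^{*}$ we have $h_{u}(t_{*})>0$ for every $u$; by the previous paragraph $\mathcal J_{q}$ then has no nonzero critical point, proving item 1. No Pohozaev identity is used.

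For Part 2 I work in $H^{1}_{r}(\mathbb R^{3})$ to exploit the compact embedding $H^{1}_{r}(\mathbb R^{3})\hookrightarrow L^{p}(\mathbb R^{3})$. On the directions with $q^{2}<\Lambda(u)$ the fibering maps produce two sets $\mathcal N^{+}=\{t_{2}(u)u\}$ and $\mathcal N^{-}=\{t_{1}(u)u\}$ of fiberwise local minima, resp. local maxima, both lying in the Nehari set and bounded away from $0$. The candidate $u_{q}$ is obtained by minimizing $\mathcal J_{q}$ over $\mathcal N^{+}$: boundedness from below of $\mathcal J_{q}$ and of minimizing sequences (again via Proposition \ref{ine1} and interpolation through a weighted Young inequality), weak lower semicontinuity of the nonlocal term and the compact embedding yield a nonzero minimizer, which is a genuine critical point by the usual Nehari/Lagrange–multiplier argument. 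I then let $q_{0}^{*}$ be the threshold at which $m(q):=\inf_{\mathcal N^{+}}\mathcal J_{q}$ changes sign; monotonicity in $q$ of the fiber minima $\psi_{u}(t_{2}(u))$ gives $m(q)<0$ for $q<q_{0}^{*}$ (so $u_{q}$ is a global minimum of negative energy), $m(q_{0}^{*})=0$, and $m(q)>0$ on a right neighbourhood $(q_{0}^{*},q_{0}^{*}+\varepsilon)$ (so $u_{q}$ becomes a strict local minimum of positive energy): this is the trichotomy \ref{i1T1}--\ref{i2T1}.

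Finally, the second solution $w_{q}$ comes from a Mountain Pass scheme between the origin and $u_{q}$. The set $\mathcal N^{-}$ of fiberwise maxima is a topological barrier separating $0$ (reached along each fiber with $t<t_{1}$) from $u_{q}$ (at $t=t_{2}>t_{1}$), so every path joining them meets $\mathcal N^{-}$ and the Mountain Pass level obeys $c_{q}\ge\inf_{\mathcal N^{-}}\mathcal J_{q}$; the fibering estimates give $\inf_{\mathcal N^{-}}\mathcal J_{q}>\max\{0,\mathcal J_{q}(u_{q})\}$. The compact embedding delivers the Palais–Smale condition, so the Mountain Pass Theorem produces $w_{q}$ at level $c_{q}$, with $\mathcal J_{q}(w_{q})>\max\{0,\mathcal J_{q}(u_{q})\}$. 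I expect the main obstacle to be exactly this step for $q>q_{0}^{*}$: there $\mathcal J_{q}\ge0$ everywhere, so there is no base point of negative energy and the classical argument of \cite{GaeDa,Ruiz} fails; the way out is to use the positive–energy local minimum $u_{q}$ itself as the far base point and the separation property of $\mathcal N^{-}$ to keep the two levels strictly apart. What then remains is the bookkeeping needed to fix $q_{0},q_{0}^{*},\varepsilon$ and to verify $q_{0}^{*}+\varepsilon<q^{*}$.
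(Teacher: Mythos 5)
Your treatment of item 1 is correct and coincides with the paper's: your $\Lambda(u)$ is exactly $q(u)^{2}$ from \eqref{extremalfun}, and your proof that $\sup_{u\neq0}\Lambda(u)<\infty$ (Proposition \ref{ine1} plus $L^{2}$--$L^{3}$ interpolation) is Lemma \ref{extremalparamater}. The genuine gaps are all in item 2, and the most serious one is where you write that ``the compact embedding delivers the Palais--Smale condition'' and that boundedness of minimizing sequences follows from Proposition \ref{ine1} ``and interpolation through a weighted Young inequality''. For $p\in(2,3]$ this is precisely what soft arguments do \emph{not} give: the standard combinations $\mathcal J_q(u)-\tfrac14\mathcal J_q'(u)[u]$ or $\mathcal J_q(u)-\tfrac1p\mathcal J_q'(u)[u]$ fail to bound $\|u\|$ because $p<4$, and the compact embedding $H^{1}_{r}(\mathbb R^{3})\hookrightarrow L^{p}(\mathbb R^{3})$ is of no use until the sequence is known to be bounded. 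Establishing that boundedness is the technical core of the paper (Proposition \ref{MPS}): it uses the pointwise functions $f,g$, a measure estimate on the sets $A_{n}$ where they are negative, Strauss's radial decay \eqref{stra}, and the monotonicity of the kernel $(1-e^{-r/a})/r$, to derive a contradiction from $\|u_{n}\|\to\infty$. Even the statement $\inf_{H^{1}_{r}}\mathcal J_q>-\infty$, which your plan invokes as routine, is nontrivial for these $p$ and is proved by the same mechanism. Without an argument of this type, both your minimization over $\mathcal N^{+}$ and your Mountain Pass step are unsupported.

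Two further gaps. First, defining $q_{0}^{*}$ as ``the threshold where $m(q)=\inf_{\mathcal N^{+}}\mathcal J_q$ changes sign'' hides the attainment statements the paper must prove: that at $q=q_{0}^{*}$ the infimum (which equals $0$ by Corollary \ref{zeroenergylevel}) is attained by a \emph{nonzero} function --- in the paper this is Corollary \ref{Mq_0}, obtained as a limit of the minimizers $u_{q_{n}}$, $q_{n}\uparrow q_{0}^{*}$, using the uniform lower bound \eqref{eq:bddaway} on all Nehari manifolds and Remark \ref{rem:qn} to keep the limit away from zero --- and that for $q$ slightly above $q_{0}^{*}$ the infimum over $\mathcal N_q^{+}$ is still attained, positive and small, which requires $q_{0}^{*}<q(u_{q_{0}^{*}})$ (Remark \ref{RMK}), hence the minimizer at $q_{0}^{*}$ itself, plus Propositions \ref{enerynear}, \ref{N0ene} and \ref{EKE1}; your appeal to ``monotonicity in $q$'' does not replace this chain. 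Second, your claim that $\mathcal N^{-}$ is a topological barrier met by every path from $0$ to $u_{q}$ is unjustified: $\mathcal N^{-}$ misses every ray along which the fiber map is of type (\ref{i2fibering}) or (\ref{i3fibering}) of Proposition \ref{fiberingmaps}, so it is not a priori separating. The paper does not need it: the genuine barrier is the sphere $\|u\|=\rho$ of Proposition \ref{MPGAN}, since $\rho<\widetilde C\le\|u_{q}\|$ by \eqref{eq:bddaway}, and the inequalities $M>\delta>\widehat{\mathcal J}_q=\mathcal J_q(u_{q})$ already force the Mountain Pass level to satisfy $d_{q}\ge M>\max\{0,\mathcal J_q(u_{q})\}$.
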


\medskip

As we can see, whenever $q=q_{0}^{*}$, then $\mathcal J_{q_{0}^{*}}$ is non-negative and we find a global
minimiser at zero energy; then an additional work is needed in order to show that this is not the zero function.
This result is new also in the case $a=0$.

Concerning the  {\sl extremal values} $q_{0}^{*}$ and $q^{*}$ we say here they have a variational
characterisation (see Section \ref{sec:prelim}).
Furthermore, although we were not able to prove it, it seems plausible that for all $q\in (0,q^*)$, the system \eqref{p} has two positive solutions satisfying the properties in Theorem \ref{T1} and $q^*$ is in fact a bifurcation parameter where the two solutions (the local minimum and the Mountain Pass type solution) collapses. 

\medskip

We point out finally that similar results have been obtained in some nonlinear problems depending on a parameter 
in some recent papers: see Il'yasov and Silva \cite{YaKa}, Silva and Macedo \cite{KaAb}.

\medskip

This paper is organised as follows. 

In Section \ref{sec:prelim}
some preliminaries and technical results (true in the general nonradial setting) are given.
As a byproduct of these results, the proof of item 1. in Theorem \ref{T1} follows, see 
Corollary \ref{nehari}.

In Section \ref{sec:PS} the important Proposition \ref{MPS} is proved. It concerns with radial 
functions and is fundamental in proving our result.

Finally, the proof of Theorem \ref{T1} is completed in Section \ref{sec:final}.

\medskip

As a matter of notations, we use the generic letters $C,C',\ldots$ to denote a positive
constant, usually related to Sobolev embedding, whose value may also change
from line to line: no confusion should arise.

\section{Preliminaries and technical  results}\label{sec:prelim}

In \cite{GaeDa} some properties of the solution $\phi_{u}$ are found.
 However to deal with the case $p\in(2,3]$
we need also the next ones. Of course this applies just for $a\neq0.$

\begin{lemma}\label{regu}
 For each $u\in H^1(\mathbb{R}^3)$ we have
 \begin{enumerate}[label=(\roman*),ref=\roman*]
 \item\label{1regu} $\Delta\phi_u\in \mathcal D$, \medskip
 \item\label{2regu} $a^{2}\Delta\phi_u\le \phi_u.$
 \end{enumerate}

\end{lemma}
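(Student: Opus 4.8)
The plan is to establish both statements in Lemma~\ref{regu} by working directly with the equation \eqref{solu} that characterises $\phi_u$, exploiting the regularity and positivity that it forces. Recall that $\phi_u\in\mathcal D$ solves
\begin{equation*}
-\Delta\phi_u+a^2\Delta^2\phi_u=4\pi u^2.
\end{equation*}
The key observation is that the auxiliary function $\psi:=a^2\Delta\phi_u$ should itself be identifiable as a solution of a tractable elliptic problem; rearranging the equation as $a^2\Delta^2\phi_u=\Delta\phi_u+4\pi u^2$ and applying $\Delta$ (or, more cleanly, testing against suitable functions) suggests that $\psi$ satisfies an equation of the form $-\Delta\psi+\text{(lower order)}=\text{source}$. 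First I would prove item~\ref{1regu} by a bootstrap/elliptic-regularity argument: since $\phi_u\in\mathcal D$ means $\phi_u\in D^{1,2}$ and $\Delta\phi_u\in L^2$, and since $u^2\in L^1\cap L^{6/5}$ (by Sobolev embedding $H^1\hookrightarrow L^p$), the equation gives control on $\Delta^2\phi_u$, and hence on $\Delta\phi_u$ as an element of $\mathcal D$ itself. Concretely, I would show $\Delta(\Delta\phi_u)\in L^2$ and $\nabla(\Delta\phi_u)\in L^2$ using the representation formula $\phi_u=\frac{1-e^{-|\cdot|/a}}{|\cdot|}*u^2$, from which one can differentiate under the convolution and estimate directly.

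For item~\ref{2regu}, the inequality $a^2\Delta\phi_u\le\phi_u$ is a pointwise comparison, so the natural tool is a maximum principle applied to $v:=\phi_u-a^2\Delta\phi_u$. From the equation, $v=\phi_u-a^2\Delta\phi_u$ satisfies $-\Delta v = -\Delta\phi_u+a^2\Delta^2\phi_u=4\pi u^2\ge 0$ by \eqref{solu}; thus $v$ is superharmonic (more precisely $-\Delta v\ge 0$) on all of $\mathbb R^3$. Since we want $v\ge 0$, I would argue that a nonnegative superharmonic function with the appropriate decay at infinity must be nonnegative: the representation $v=\frac{1}{|\cdot|}*(4\pi u^2)\,/\,(4\pi)$, i.e.\ $v=\phi_u^{\mathrm{SP}}$-type Newtonian potential of the nonnegative density $u^2$, is manifestly nonnegative. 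Indeed one expects $\phi_u-a^2\Delta\phi_u=\frac{1}{|\cdot|}*u^2$ exactly, which is the Newtonian potential of $u^2\ge0$ and hence positive everywhere; item~\ref{1regu} guarantees this identity makes sense since $\Delta\phi_u\in\mathcal D$ has the required integrability.

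The main obstacle I anticipate is justifying the pointwise comparison rigorously at the level of regularity available: the maximum principle on the unbounded domain $\mathbb R^3$ requires care about behaviour at infinity, and one must verify $v$ is regular enough (at least $C^2$ or a suitable weak formulation with a comparison principle) for the argument to apply. I would handle this either by the explicit convolution identity, where positivity is transparent once the kernel $1/|\cdot|$ and the density $u^2\ge0$ are in hand, or by testing the inequality $-\Delta v\ge 0$ against $v^-=\max\{-v,0\}$ and showing $\int|\nabla v^-|^2\le 0$, forcing $v^-\equiv0$; the decay of $\phi_u$ and $\Delta\phi_u$ (from item~\ref{1regu}) ensures the boundary terms in the integration by parts vanish. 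The cleanest route is the explicit kernel computation: verifying that convolving $u^2$ against $\frac{1-e^{-|\cdot|/a}}{|\cdot|}$ and against $\frac{e^{-|\cdot|/a}}{|\cdot|}$ separately yields $\phi_u$ and (a multiple of) $-a^2\Delta\phi_u$ respectively, so that $\phi_u-a^2\Delta\phi_u=\frac{1}{|\cdot|}*u^2\ge0$ follows by an elementary splitting of the Bopp-Podolsky kernel.
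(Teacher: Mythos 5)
Your proposal is sound, and your two suggested routes for item \eqref{2regu} actually bracket the paper's own proof. For \eqref{1regu} the paper does what you anticipated with the auxiliary function, but more cleanly than differentiating the convolution: setting $\psi_u:=-\Delta\phi_u$, equation \eqref{solu} becomes the second-order Yukawa-type problem $-a^2\Delta\psi_u+\psi_u=4\pi u^2$, and since $u^2\in L^2(\mathbb{R}^3)$, standard elliptic regularity gives $\psi_u\in H^2(\mathbb{R}^3)$, hence $\Delta\phi_u\in\mathcal D$. For \eqref{2regu} the paper uses precisely your second alternative: with $v=\phi_u-a^2\Delta\phi_u$ one has $-\Delta v=4\pi u^2\ge 0$, $v\in D^{1,2}(\mathbb{R}^3)$ and continuous, and testing against $v^-=\max\{-v,0\}$ yields $-\int_{\Omega^-}|\nabla v|^2\ge 0$; here no delicate analysis at infinity is needed, because $v^-\in D^{1,2}\hookrightarrow L^6$ already rules out nonzero constants, so $\Omega^-=\emptyset$. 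Your preferred route, the kernel splitting $\frac{1}{|\cdot|}=\frac{1-e^{-|\cdot|/a}}{|\cdot|}+\frac{e^{-|\cdot|/a}}{|\cdot|}$, is genuinely different and in fact proves more: since distributionally $\Delta\bigl(\tfrac{1-e^{-r/a}}{r}\bigr)=-\tfrac{1}{a^2}\tfrac{e^{-r/a}}{r}$ (the two $-4\pi\delta$ contributions cancel), one gets the identity $-a^2\Delta\phi_u=\frac{e^{-|\cdot|/a}}{|\cdot|}*u^2\ge 0$, hence $a^2\Delta\phi_u\le 0\le \phi_u$ pointwise and $\phi_u-a^2\Delta\phi_u=\frac{1}{|\cdot|}*u^2\ge0$, with no maximum principle at all. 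What the paper's argument buys is brevity and independence from explicit kernel computations; what yours buys is stronger, sign-separated conclusions, at the price of justifying the distributional identities and differentiation under the convolution (routine, since $u^2\in L^1\cap L^3$).

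One concrete imprecision to repair: the integrability you quote for the source, $u^2\in L^1\cap L^{6/5}$, is not enough for item \eqref{1regu}. Membership of $\Delta\phi_u$ in $\mathcal D$ requires $\Delta^2\phi_u\in L^2$, and whether you read this off the equation $a^2\Delta^2\phi_u=4\pi u^2+\Delta\phi_u$, from Yukawa regularity, or from Young's inequality applied to your convolution representation, you need $u^2\in L^2$, i.e. $u\in L^4$, which holds by $H^1(\mathbb{R}^3)\hookrightarrow L^4(\mathbb{R}^3)$ and is exactly what the paper invokes. With only an $L^{6/5}$ right-hand side you would obtain $W^{2,6/5}$ regularity for $-\Delta\phi_u$, which does not give $\Delta^2\phi_u\in L^2$. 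This is a one-line repair, not a structural gap.
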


\begin{proof} 
Let us fix $u\in H^{1}(\mathbb R^{3})$ and let $\psi_{u}: = -\Delta \phi_{u}$. Then 
\begin{equation*}
-a^{2}\Delta \psi_{u} +\psi_{u}= 4\pi u^{2}
\end{equation*}
Since $u^{2}\in L^{2}(\mathbb R^{3})$, by standard results 
%(see e.g. Livro Kesavan Th. 3.3.1)
we have $\psi_{u}\in H^{2}(\mathbb R^{3})$
and in particular, 
$$\psi_{u}\in L^{6}(\mathbb R^{3}), \  \nabla \psi_{u} \in L^{2}(\mathbb R^{3}) \quad \text {and } \quad  \Delta \psi_{u}\in L^{2}(\mathbb R^{3}).$$
This gives	 $\Delta\phi_{u} \in  D^{1,2}(\mathbb R^{3})$ and $\Delta^{2}\phi_{u}\in L^{2}(\mathbb R^{3})$, namely
$\Delta\phi_{u}\in \mathcal D$ proving \eqref{1regu}.

On the other hand, if we set $v=-a^2\Delta\phi_u+\phi_u$,  then
	\begin{equation*}
	-\Delta v=4\pi u^2 \ge 0
	\end{equation*}
	and $v\in {D}^{1,2}(\mathbb{R}^3)$ and is continuous.
	Define $\Omega^-=\{x\in \mathbb{R}^3:\ v(x)<0\}$ and  suppose that $\Omega^-\neq \emptyset$. Once $v$ is continuous, the set $\Omega^-$ is  open. Let $v^-=\max\{-v,0\}$. It follows that 
	\begin{equation*}
	-\int _{\Omega^-}|\nabla v|^2=\int \nabla v\nabla v^-\ge 0,
	\end{equation*}
	which is a contradiction, therefore, $\Omega^-=\emptyset$ and $a^{2}\Delta\phi_u\le {\phi_u}$ in $\mathbb R^{3}$, proving \eqref{2regu}.
\end{proof}
%
%\begin{lemma}\label{maxi} For each  $u\in H^1(\mathbb{R}^3)$, there holds 
%	\begin{equation*}
%	\Delta\phi_u\le \frac{\phi_u}{a^2}.
%	\end{equation*}
%\end{lemma}
%\begin{proof} Indeed, if we set $v=-a^2\Delta\phi_u+\phi_u$,  then
%	\begin{equation*}
%	-\Delta v=4\pi u^2 \ge 0
%	\end{equation*}
%	and $v\in {D}^{1,2}(\mathbb{R}^3)$ and is continuous.
%	Define $\Omega^-=\{x\in \mathbb{R}^3:\ v(x)<0\}$ and  suppose that $\Omega^-\neq \emptyset$. Once $v$ is continuous, the set $\Omega^-$ is  open. Let $v^-=\max\{-v,0\}$. It follows that 
%	\begin{equation*}
%	-\int _{\Omega^-}|\nabla v|^2=\int \nabla v\nabla v^-\ge 0,
%	\end{equation*}
%	which is a contradiction, therefore, $\Omega^-=\emptyset$ and $\Delta\phi_u\le {\phi_u}/{a^2}$ in $\mathbb R^{3}$.
%	\end{proof}

The next result will be useful to get a generalisation of \cite[Formula (19)]{Ruiz} to the case $a\neq0$.
 \begin{proposition}\label{ine1} There holds
	\begin{equation*}
\int |u|^3\le \frac{1}{\pi}\|\phi_u\|_{\mathcal{D}}\|\nabla u\|_2, \quad \forall\,  u\in H^1(\mathbb{R}^3).
	\end{equation*}
\end{proposition}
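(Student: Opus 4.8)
The plan is to adapt to the presence of the biharmonic term the device used by Ruiz for the pure Poisson case, namely testing the potential equation against $|u|$. Fix $u\in H^1(\mathbb R^3)$ and multiply the equation \eqref{solu} for $\phi_u$, that is $-\Delta\phi_u+a^2\Delta^2\phi_u=4\pi u^2$, by the admissible test function $|u|\in H^1(\mathbb R^3)$, integrating over $\mathbb R^3$. The right-hand side yields exactly $4\pi\int u^2\,|u|=4\pi\int|u|^3$, so the entire task reduces to rewriting the left-hand side as a pairing of gradients against $\nabla|u|$ and then estimating it.

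First I would integrate by parts. The term $\int(-\Delta\phi_u)|u|$ becomes $\int\nabla\phi_u\cdot\nabla|u|$ with no difficulty since $\phi_u\in D^{1,2}(\mathbb R^3)$. For the biharmonic term I would invoke Lemma \ref{regu}\eqref{1regu}: because $\Delta\phi_u\in\mathcal D$, in particular $\nabla\Delta\phi_u\in L^2(\mathbb R^3)$, which legitimizes the integration by parts $a^2\int\Delta^2\phi_u\,|u|=-a^2\int\nabla\Delta\phi_u\cdot\nabla|u|$. Collecting the two contributions gives the identity
\begin{equation*}
4\pi\int|u|^3=\int\nabla\big(\phi_u-a^2\Delta\phi_u\big)\cdot\nabla|u|.
\end{equation*}
Writing $w:=\phi_u-a^2\Delta\phi_u$, applying the Cauchy--Schwarz inequality, and using the a.e.\ identity $\big|\nabla|u|\big|=|\nabla u|$ valid for $u\in H^1(\mathbb R^3)$, I obtain $4\pi\int|u|^3\le\|\nabla w\|_2\,\|\nabla u\|_2$.

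It then remains to bound $\|\nabla w\|_2$ by a fixed multiple of $\|\phi_u\|_{\mathcal D}$, and this is where Lemma \ref{regu}\eqref{2regu} enters and where I expect the real work to lie. By that lemma $w\ge 0$, and by construction $-\Delta w=4\pi u^2$, so that
\begin{equation*}
\|\nabla w\|_2^2=\int(-\Delta w)\,w=4\pi\int u^2 w=4\pi\int u^2\phi_u-4\pi a^2\int u^2\Delta\phi_u ,
\end{equation*}
where the first summand is precisely $\|\phi_u\|_{\mathcal D}^2$ by the relation $4\pi\int\phi_u u^2=\|\phi_u\|_{\mathcal D}^2$ recorded after \eqref{solu}. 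The genuinely new, $a$-dependent summand $-4\pi a^2\int u^2\Delta\phi_u$ is the delicate point: it is nonnegative (since $\Delta\phi_u\le 0$), and the heart of the matter is to control it against $\|\phi_u\|_{\mathcal D}^2$, using both the sign information $a^2\Delta\phi_u\le\phi_u$ of Lemma \ref{regu}\eqref{2regu} and the regularity supplied by Lemma \ref{regu}\eqref{1regu}. Once one establishes an estimate of the form $\|\nabla w\|_2\le 4\,\|\phi_u\|_{\mathcal D}$, the claimed inequality follows immediately after dividing by $4\pi$; the constant $1/\pi$ is certainly not sharp, since for $a=0$ the same scheme already produces $1/(4\pi)$. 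I expect precisely this last estimate of the biharmonic correction to be the main obstacle, because that term is not comparable to the $\mathcal D$-norm contribution on a term-by-term basis, and its handling is exactly what forces the use of Lemma \ref{regu} in the case $a\neq 0$.
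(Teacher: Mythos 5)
Your proposal is not a complete proof: everything up to and including the identity $4\pi\int|u|^3=\int\nabla\bigl(\phi_u-a^2\Delta\phi_u\bigr)\cdot\nabla|u|$ and the Cauchy--Schwarz step coincides with the paper's derivation of \eqref{diffeq1} (test \eqref{solu} against $|u|$, integrate by parts using Lemma \ref{regu}\eqref{1regu}), but you then reduce the proposition to the estimate $\|\nabla w\|_2\le C\,\|\phi_u\|_{\mathcal{D}}$ for $w=\phi_u-a^2\Delta\phi_u$ and stop, explicitly deferring it as ``the main obstacle''. That deferred estimate \emph{is} the proposition; a proposal that ends with ``once one establishes [it], the claim follows'' has a genuine gap. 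For comparison, the paper attacks exactly this point by testing \eqref{solu} against $\Delta\phi_u$, invoking Lemma \ref{regu}\eqref{2regu} on the term $4\pi\int\Delta\phi_u\,u^2$ and Young's inequality on the cross term, which yields \eqref{diffeq3}, i.e.\ $a^2\|\nabla\Delta\phi_u\|_2\le\sqrt{3}\,\|\phi_u\|_{\mathcal{D}}$, and hence the constant $(1+\sqrt{3})/4\pi<1/\pi$.

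You should know, however, that the gap you left cannot be filled, and that the paper's way of filling it rests on a sign error. Your $w$ satisfies $-\Delta w=4\pi u^2$ with $w\in D^{1,2}(\mathbb{R}^3)$, so $w$ is nothing but the Coulomb potential $|\cdot|^{-1}*u^2$ and $\|\nabla w\|_2^2=4\pi\iint|x-y|^{-1}u^2(x)u^2(y)\,dx\,dy$; your Cauchy--Schwarz step is literally Ruiz's inequality \cite{Ruiz}. Demanding $\|\nabla w\|_2\le C\|\phi_u\|_{\mathcal{D}}$ asks the Bopp--Podolsky energy, whose kernel obeys $(1-e^{-r/a})/r\le 1/a$, to dominate the Coulomb energy, and this fails under concentration: for fixed $a>0$, $\chi\neq 0$ smooth and $u_\lambda=\lambda^{1/2}\chi(\lambda\,\cdot)$ one has, exactly, $\int|u_\lambda|^3=\lambda^{-3/2}\|\chi\|_3^3$, $\|\nabla u_\lambda\|_2=\|\nabla\chi\|_2$, and $\|\phi_{u_\lambda}\|_{\mathcal{D}}^2\le\frac{4\pi}{a}\bigl(\int u_\lambda^2\bigr)^2=\frac{4\pi}{a}\|\chi\|_2^4\lambda^{-4}$, so for $\lambda$ large $\int|u_\lambda|^3$ exceeds $\frac{1}{\pi}\|\phi_{u_\lambda}\|_{\mathcal{D}}\|\nabla u_\lambda\|_2$; both your missing estimate and the stated proposition fail. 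The paper's \eqref{diffeq3} evades this only through an incorrect identity: multiplying \eqref{solu} by $\Delta\phi_u$ actually gives $a^2\|\nabla\Delta\phi_u\|_2^2+\|\Delta\phi_u\|_2^2=-4\pi\int u^2\,\Delta\phi_u$, whose right-hand side is nonnegative (since $-\Delta\phi_u\ge 0$) and is \emph{not} controlled by Lemma \ref{regu}\eqref{2regu}; the lemma applies only to the opposite-signed term $+4\pi\int\Delta\phi_u\,u^2$ that appears in the paper's version of the identity. So your instinct that the $a$-dependent term is the delicate point is correct in the strongest sense: it is not merely delicate but insurmountable, and any honest completion of your scheme (or the paper's) would have to keep that term, landing on Ruiz's Coulomb-energy inequality rather than the claimed Bopp--Podolsky one.
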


\begin{proof} %If $a=0$ the result is trivial. If $a>0$, note that 
%	For every $u\in H^{1}(\mathbb R^{3})$ it is
%	\begin{equation}\label{diffeq}
%-\Delta\phi_u+	a^2\Delta^2\phi_u=4\pi u^2.
%	\end{equation}
For $u\in H^{1}(\mathbb R^{3})$ fixed, let us consider equation \eqref{solu}.
	 Since by  Lemma \ref{regu} it holds in particular that $\nabla \Delta\phi_{u}\in L^{2}(\mathbb R^{3})$,
	 by multiplying the  equation \eqref{solu} by $|u|\in H^{1}(\mathbb R^{3}) $ and integrating we get
	\begin{eqnarray}\label{diffeq1}
	4\pi\int |u|^3&=&a^2\int \nabla (-\Delta\phi_u)\nabla |u|+\int \nabla \phi_u\nabla|u| \nonumber\\
	&\le& a^2\|\nabla(-\Delta\phi_u)\|_2\|\nabla u\|_2+\|\nabla \phi_u\|_2\|\nabla u\|_2 \nonumber\\
	&=&\left(a^2\|\nabla(-\Delta\phi_u)\|_2+\|\nabla \phi_u\|_2\right)\|\nabla u\|_2\nonumber \\
	&\le& \left(a^2\|\nabla(\Delta\phi_u)\|_2+\|\phi_u\|_{\mathcal{D}}\right)\| \nabla u\|_2.
	\end{eqnarray}
	On the other hand by multiplying  \eqref{solu} by $\Delta\phi_{u}\in \mathcal D$
        and  making use of   \eqref{2regu} of Lemma \ref{regu} we get
	\begin{eqnarray*}%\label{diffeq2}
	a^2\|\nabla (\Delta\phi_u)\|_2^2&=&4\pi\int \Delta\phi_u u^2-\int \nabla \phi_u \nabla(\Delta\phi_u) \nonumber\\
	&\le&  \frac{1}{a^{2}}\|\phi_u\|_{\mathcal{D}}^2+\frac{\varepsilon^2}{2} \|\nabla(\Delta\phi_u)\|_2^2+\frac{1}{2\varepsilon^2}\|\nabla \phi_u\|_2^2 \nonumber\\
	&\le&  \frac{1}{a^{2}}\|\phi_u\|_{\mathcal{D}}^2+\frac{\varepsilon^2}{2} \|\nabla(\Delta\phi_u)\|_2^2+\frac{1}{2\varepsilon^2}\| \phi_u\|_{\mathcal{D}}^2.
	\end{eqnarray*}
	By choosing  $\varepsilon=a$ above %\eqref{diffeq2} 
	we conclude that, for all $u\in H^{1}(\mathbb R^{3})$,
	\begin{equation}\label{diffeq3}
	a^2\|\nabla (\Delta\phi_u)\|_2^2\le\frac{2}{a^{2}}\|\phi_u\|_{\mathcal{D}}^2+\frac{1}{a^2}\|\phi_u\|_{\mathcal{D}}^2=\frac{3}{a^2}\|\phi_u\|_{\mathcal{D}}^2.
	\end{equation}
	From \eqref{diffeq1} and \eqref{diffeq3} we conclude that 
	\begin{equation*}
	\int |u|^3\le \frac{1}{\pi}\|\phi_u\|_{\mathcal{D}}\|\nabla u\|_2,
	\end{equation*}
	completing the proof.
\end{proof}
%\begin{proposition}\label{awayzero} Suppose that $\mathcal{J}'_q(u)=0$, then there exists a positive constant $C_q$ such that $\|u\|\ge C_q$.
%\end{proposition}
%\begin{proof} From $\mathcal{J}'_q(u)=0$ we have that 
%	\begin{equation*}
%	1+q^2\|u\|^2\left\|\phi_{\frac{u}{\|u\|}}\right\|_\mathcal{D}^2-\|u\|^{p-2}\left\|\frac{u}{\|u\|}\right\|_p^p=0,
%	\end{equation*}
%	which completes the proof.	
%\end{proof}
We conclude this section by showing a first simple property of the energy functional.
The next result says that the functional $\mathcal J_{q}$ has a strict local minimum in $0$,
uniformly in $q$. However to have the complete Mountain Pass structure
$q$ has to be small, as it will be shown in Corollary \ref{zeroenergylevel}.
\begin{proposition}\label{MPGAN} There exist $\rho>0$ and $M>0$ such that 
$$\forall q\in \mathbb R, u \in H^{1}(\mathbb R^{3}) \text{ with } \|u\|=\rho :  \  \mathcal{J}_q(u)\ge M.$$
\end{proposition}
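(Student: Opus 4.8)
The plan is to show that near the origin the quadratic term $\tfrac12\|u\|^2$ dominates both the nonlocal term and the $L^p$ term, so that $\mathcal{J}_q$ stays bounded below by a positive constant on the sphere $\|u\|=\rho$, and crucially that this works uniformly in $q$. The key observation is that the nonlocal term $\tfrac{q^2}{4}\int \phi_u u^2 = \tfrac{q^2}{4}\|\phi_u\|_{\mathcal D}^2$ is always nonnegative, so it only helps us: for every $q\in\mathbb R$ we have
\begin{equation*}
\mathcal{J}_q(u)\ge \frac12\|u\|^2-\frac1p\|u\|_p^p.
\end{equation*}
This single inequality is what removes any dependence on $q$, and it is the conceptual heart of the argument.

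Next I would estimate the nonlinearity by Sobolev embedding. Since $p\in(2,3]\subset(2,6)$, the embedding $H^1(\mathbb R^3)\hookrightarrow L^p(\mathbb R^3)$ is continuous, so there is a constant $C>0$ with $\|u\|_p\le C\|u\|$, whence $\|u\|_p^p\le C^p\|u\|^p$. Substituting gives
\begin{equation*}
\mathcal{J}_q(u)\ge \frac12\|u\|^2-\frac{C^p}{p}\|u\|^p=\|u\|^2\Big(\frac12-\frac{C^p}{p}\|u\|^{p-2}\Big).
\end{equation*}
Because $p>2$, the factor $\tfrac{C^p}{p}\|u\|^{p-2}\to0$ as $\|u\|\to0$, so I can fix a radius $\rho>0$ small enough that $\tfrac12-\tfrac{C^p}{p}\rho^{p-2}\ge\tfrac14$. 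Then on the sphere $\|u\|=\rho$ we obtain $\mathcal{J}_q(u)\ge\tfrac14\rho^2=:M>0$ for every $q\in\mathbb R$, which is exactly the claim.

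There is essentially no hard part here: the statement is a routine mountain-pass-geometry lemma, and the only thing one must notice — the point the authors surely intend — is that the nonlocal term has a favorable sign and can simply be discarded, so that the estimate is automatically uniform in $q$. The one mild subtlety worth flagging is that the claim asserts a \emph{strict} local minimum structure with a single pair $(\rho,M)$ valid for all $q$ simultaneously; this is delivered precisely because the discarded term is $\ge0$ regardless of $q$, rather than requiring any smallness of $q$. The smallness of $q$ only becomes relevant later, for producing a point where $\mathcal{J}_q$ is negative and thereby completing the full mountain-pass geometry, as the authors note in referring forward to Corollary \ref{zeroenergylevel}.
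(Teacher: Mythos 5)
Your proof is correct and follows essentially the same route as the paper: discard the nonnegative nonlocal term $\frac{q^2}{4}\int\phi_u u^2\ge 0$ to get a bound uniform in $q$, apply the Sobolev embedding to obtain $\mathcal{J}_q(u)\ge \frac{1}{2}\|u\|^2-C\|u\|^p$, and choose $\rho$ small using $p>2$. You have merely spelled out the elementary details the paper compresses into ``the conclusion easily follows.''
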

\begin{proof} Since
	\begin{equation*}
	\mathcal{J}_q(u)\ge \frac{1}{2}\|u\|^2-\frac{1}{p}\|u\|_p^p\ge \frac{1}{2}\|u\|^2-C \|u\|^p,
	\end{equation*}
	the conclusion easily follows.
\end{proof}

%\section{Technical Results}
%For $u\in H^1(\mathbb{R}^3)$, consider the reduced functional 
%
%\begin{equation*}
%\mathcal{J}_q(u)=\frac{1}{2}\|u\|^2+\frac{q^2}{4}\int \phi_u u^2-\frac{1}{p}\|u\|_p^p,
%\end{equation*}
%where $\|u\|^2=\|u\|_2^2+\|\nabla u\|_2^2$ and $\phi_u\in \mathcal{D}_a\equiv\{\phi\in \mathcal{D}^{1,2}(\mathbb{R}^3):\ a^2\Delta \phi\in L^2(\mathbb{R}^3)\}$ is the unique solution of 
%\begin{equation}\label{solu}a^2\Delta^2 \phi-\Delta\phi=4\pi u^2.
%\end{equation}
%The Sobolev space $\mathcal{D}$ is equipped with the norm 
%\begin{equation}\label{norm}
%\|\phi\|_{\mathcal{D}}^2=\|\nabla \phi\|_2^2+a^2\|\Delta \phi\|_2^2,\ \forall\ \phi\in \mathcal{D}.
%\end{equation}
%Observe from \eqref{solu} that
%\begin{equation}\label{norm1}
%4\pi \int \phi_u u^2=\|\phi_u\|^2_{\mathcal{D}},\quad  \forall\ u\in H^{1}(\mathbb R^{3}).
%\end{equation}

In this Section we establish some notations and study the geometry of the functional $\mathcal{J}_q$.  We observe that $\phi_{tu}=t^2\phi_u$ and therefore, if $\psi_{q,u}:[0,\infty)\to \mathbb{R}$ is defined by $\psi_{q,u}(t)=\mathcal{J}_q(tu)$, we have that 
\begin{equation*}
\psi_{q,u}(t)=\frac{t^2}{2}\| u\|^2+\frac{q^2t^4}{4}\int \phi_u u^2-\frac{t^p}{p}\|u\|_p^p.
\end{equation*}
Whenever $q$ and $u$ are fixed, we will use for brevity also the notation $\psi:=\psi_{q,u}$.
A simple analysis shows that
\begin{proposition}\label{fiberingmaps} For each $q\in\mathbb{R}\setminus\{0\}$ and $u\in H^1(\mathbb{R}^3)\setminus\{0\}$, there are only three possibilities for the graph of $ \psi$:
	\begin{enumerate}[label=(\roman*),ref=\roman*]
		\item\label{i1fibering} the function $\psi$ has only two critical points when $t>0$, to wit, $0<t_q^-(u)<t_q^+(u)$. Moreover, $t_q^-(u)$ is a local maximum with $\psi''(t_q^-(u))<0$ and $t_q^+(u)$ is a local minimum with $\psi''(t_q^+(u))>0$; \medskip
		\item\label{i2fibering} the function  $\psi$ has only one critical point when $t>0$ at the value $t_q(u)$. Moreover, $\psi''(t_q(u))=0$ and  $\psi$ is increasing;  \medskip
		\item\label{i3fibering} the function  $\psi$ is increasing and has no critical points.
	\end{enumerate}
\end{proposition}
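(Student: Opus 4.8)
The plan is to reduce the whole statement to an elementary one-variable study of the sign and zeros of $\psi'$. Writing $A=\|u\|^2$, $B=q^2\int \phi_u u^2$ and $C=\|u\|_p^p$, all three are strictly positive because $u\neq 0$ and $q\neq 0$ (for $B$ I would use $4\pi\int\phi_u u^2=\|\phi_u\|^2_{\mathcal{D}}>0$, since $\phi_u\neq0$ whenever $u\neq0$), so that
$$\psi(t)=\frac{A}{2}t^2+\frac{B}{4}t^4-\frac{C}{p}t^p,\qquad \psi'(t)=At+Bt^3-Ct^{p-1}.$$
The key algebraic step is to factor out the common power $t$ and set
$$g(t):=A+Bt^2-Ct^{p-2},\qquad t>0,$$
so that $\psi'(t)=t\,g(t)$; hence, for $t>0$, the critical points of $\psi$ are exactly the zeros of $g$. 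Differentiating the relation $\psi'=t g$ once more gives $\psi''(t)=g(t)+t g'(t)$, so that at any critical point $t_0$ (where $g(t_0)=0$) one has $\psi''(t_0)=t_0 g'(t_0)$; since $t_0>0$, the concavity of $\psi$ at $t_0$ is governed entirely by the sign of $g'(t_0)$. This is what will let me read off the local maximum/minimum alternative in item \eqref{i1fibering}.

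Next I would establish that $g$ is unimodal on $(0,\infty)$. Since $p\in(2,3]$ we have $p-2\in(0,1]$, whence $t^{p-2}\to 0$ as $t\to 0^+$ and $g(0^+)=A>0$; moreover $2>p-2$ gives $g(t)\to+\infty$ as $t\to\infty$. To locate the turning points I compute
$$g'(t)=2Bt-C(p-2)t^{p-3}=t^{p-3}\bigl(2B\,t^{4-p}-C(p-2)\bigr).$$
Because $4-p\ge 1>0$, the bracket $2B t^{4-p}-C(p-2)$ is strictly increasing in $t$ and vanishes at a single point $t_*>0$; thus $g$ is strictly decreasing on $(0,t_*)$, strictly increasing on $(t_*,\infty)$, and attains a unique global minimum at $t_*$.

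It then remains to split into three cases according to the sign of $g(t_*)$. If $g(t_*)<0$, the intermediate value theorem yields exactly two zeros $0<t_q^-(u)<t_*<t_q^+(u)$; at $t_q^-(u)$ the function $g$ is still decreasing so $g'(t_q^-(u))<0$ and $\psi''(t_q^-(u))<0$ (local maximum), while at $t_q^+(u)$ one has $g'(t_q^+(u))>0$ and $\psi''(t_q^+(u))>0$ (local minimum), which is precisely case \eqref{i1fibering}. If $g(t_*)=0$, the minimum is the unique zero $t_q(u):=t_*$, so $g\ge 0$ everywhere, $\psi'\ge 0$ and $\psi$ is increasing, while $g'(t_*)=0$ forces $\psi''(t_q(u))=0$, giving case \eqref{i2fibering}. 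Finally, if $g(t_*)>0$, then $g>0$ on all of $(0,\infty)$, so $\psi'>0$ and $\psi$ is strictly increasing with no critical points, which is case \eqref{i3fibering}. I do not expect a genuine obstacle here: the only mild subtlety is that the exponent $p-2$ is fractional rather than integral, so $g$ is not a polynomial, but this is handled cleanly by the factorization of $g'$ above, which uses only $p-2>0$ and $4-p>0$.
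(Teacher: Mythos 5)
Your proof is correct and is precisely the ``simple analysis'' the paper leaves implicit (the paper states Proposition \ref{fiberingmaps} without proof): factoring $\psi'(t)=t\,g(t)$ with $g(t)=A+Bt^2-Ct^{p-2}$, showing $g$ is strictly decreasing then strictly increasing on $(0,\infty)$ with $g(0^+)>0$ and $g(t)\to+\infty$, and splitting on the sign of the minimum $g(t_*)$ yields exactly the trichotomy, with the identity $\psi''(t_0)=t_0g'(t_0)$ at critical points giving the stated second-derivative signs. No gaps; the use of $p-2\in(0,1]$ and $4-p\geq 1$ is exactly where the hypothesis $p\in(2,3]$ enters.
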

It is important to notice that \eqref{i1fibering} happens for $q$ small, and \eqref{i3fibering} for $q$ large.

Let us consider the Nehari manifold associated with the functional $\mathcal{J}_q$, that is
\begin{equation*}
\mathcal{N}_q=\{u\in H^1(\mathbb{R}^3)\setminus\{0\}:\ \psi_{q,u}'(1)=0\}.
\end{equation*}
Note that, for $u\in \mathcal N_{q}$:
$$\|u\|^{2}\leq \|u\|^{2} +q^{2}\int \phi_{u} u^{2}\leq C \|u\|^{p}$$
and then all the Nehari manifolds are bounded away from zero uniformly in $q$, in the sense that
\begin{equation}\label{eq:bddaway}
\exists\, \widetilde C>0 \ \text{ such that for all }q\in \mathbb R, \ u\in \mathcal N_{q} : \|u\|\geq \widetilde C.   
\end{equation}

%\begin{proposition}\label{prop:Nehari} It holds:
%\begin{enumerate}[label=(\roman*),ref=\roman*]
%\item\label{i1Nehari} there exists a constant $C>0$ such that, 
%$$\text{ for all }q\in \mathbb R, u\in\mathcal N_{q}  : \|u\|\geq C,$$ 
%\item\label{i2Nehari} for all $q\in \mathbb R$,
%$$\inf_{\mathcal N_{q}} J_{q} >0.$$
%\end{enumerate}
%\end{proposition}

Moreover since
\begin{equation*}
\mathcal{N}_q=\mathcal{N}_q^+\cup \mathcal{N}_q^0\cup \mathcal{N}_q^-, 
\end{equation*}
where 
\begin{equation*}
\mathcal{N}_q^+=\{u\in \mathcal{N}_q:\ \psi''(1)>0\},
\end{equation*}
\begin{equation*}
\mathcal{N}_q^0=\{u\in \mathcal{N}_q:\ \psi''(1)=0\},
\end{equation*}
\begin{equation*}
\mathcal{N}_q^-=\{u\in \mathcal{N}_q:\ \psi''(1)<0\}.
\end{equation*}
as an application of the Implicit Function Theorem one has the following:
\begin{proposition}\label{minnehari} If $\mathcal{N}_q^+,\mathcal{N}_q^-\neq \emptyset$, then $\mathcal{N}_q^+,\mathcal{N}_q^-\neq \emptyset$ are $C^1$ manifolds of codimension $1$ in $H^1(\mathbb{R}^3)$. Moreover, $u\in \mathcal{N}_q^+\cup\mathcal{N}_q^-$ is a critical point for the functional $\mathcal{J}_q$ if and only if $u$ is a critical point of the constrained functional $(\mathcal{J}_q)_{|\mathcal{N}_q^+\cup\mathcal{N}_q^-}: \mathcal{N}_q^+\cup\mathcal{N}_q^-\to \mathbb{R}$.
\end{proposition}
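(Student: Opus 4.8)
The plan is to realize both $\mathcal{N}_q^+$ and $\mathcal{N}_q^-$ as regular level sets of a single $C^1$ functional, and then to settle the critical-point correspondence by a Lagrange multiplier argument. First I would introduce the map
$$G\colon H^1(\mathbb R^3)\to\mathbb R,\qquad G(u)=\psi_{q,u}'(1)=\|u\|^2+q^2\int\phi_u u^2-\|u\|_p^p,$$
which is precisely $\langle \mathcal J_q'(u),u\rangle$, so that $\mathcal N_q=\{u\neq0:G(u)=0\}$. Its $C^1$ regularity follows from the smoothness of the quadratic term, the smoothness of the quartic nonlocal term $u\mapsto\int\phi_u u^2=\tfrac1{4\pi}\|\phi_u\|_{\mathcal D}^2$ (whose derivative is $v\mapsto 4q^2\int\phi_u u v$), and the $C^1$ dependence of $u\mapsto\|u\|_p^p$ through the embedding $H^1\hookrightarrow L^p$.

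The computational heart is the identity, valid for every $u\in\mathcal N_q$,
$$\langle G'(u),u\rangle=\psi_{q,u}'(1)+\psi_{q,u}''(1)=\psi_{q,u}''(1),$$
which I would obtain by differentiating $G(tu)=t\,\psi_{q,u}'(t)$ at $t=1$ and using $\psi_{q,u}'(1)=0$ on $\mathcal N_q$. Consequently, on the open sets $A^{\pm}=\{u\in H^1(\mathbb R^3)\setminus\{0\}:\pm\,\psi_{q,u}''(1)>0\}$ (open because $u\mapsto\psi_{q,u}''(1)$ is continuous) one has $\langle G'(u),u\rangle=\psi''_{q,u}(1)\neq0$ whenever $u\in\mathcal N_q\cap A^{\pm}=\mathcal N_q^{\pm}$, so $G'(u)\neq0$ there. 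Thus $0$ is a regular value of the restriction of $G$ to $A^+$ and to $A^-$, and the Implicit Function Theorem (submersion form) yields that $\mathcal N_q^{\pm}=G^{-1}(0)\cap A^{\pm}$ are $C^1$ submanifolds of codimension $1$; their nonemptiness is exactly the standing hypothesis.

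For the critical-point correspondence, one implication is immediate: an unconstrained critical point of $\mathcal J_q$ lying on the manifold is trivially a critical point of the restriction. For the converse, if $u\in\mathcal N_q^+\cup\mathcal N_q^-$ is a critical point of $\mathcal J_q$ constrained to the manifold, the Lagrange multiplier rule furnishes $\lambda\in\mathbb R$ with $\mathcal J_q'(u)=\lambda\,G'(u)$. Pairing with $u$ and using $\langle \mathcal J_q'(u),u\rangle=\psi_{q,u}'(1)=0$ together with $\langle G'(u),u\rangle=\psi_{q,u}''(1)\neq0$ forces $\lambda=0$, hence $\mathcal J_q'(u)=0$ and $u$ is a genuine critical point.

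I expect the main obstacle to be a matter of regularity bookkeeping rather than of ideas: verifying that $G$ is genuinely $C^1$ — in particular that the nonlocal quartic term is Fr\'echet differentiable and that $u\mapsto\|u\|_p^p$ has a continuous derivative on $H^1(\mathbb R^3)$ for $p$ as small as just above $2$ — so that both the Implicit Function Theorem and the Lagrange multiplier rule apply. Once $G\in C^1$ is secured, the identity $\langle G'(u),u\rangle=\psi_{q,u}''(1)$ renders every remaining step essentially automatic.
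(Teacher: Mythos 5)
Your proof is correct and follows exactly the route the paper intends: the paper states this proposition without proof as ``an application of the Implicit Function Theorem,'' and your argument (realizing $\mathcal{N}_q^{\pm}$ as regular level sets of $G(u)=\psi_{q,u}'(1)$ via the identity $\langle G'(u),u\rangle=\psi_{q,u}''(1)$ on $\mathcal{N}_q$, then killing the Lagrange multiplier by pairing with $u$) is precisely the standard argument being invoked. The only blemish is a harmless bookkeeping slip in the placement of the factor $q^2$ in the derivative of the nonlocal term.
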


Note that, fixed $u\in H^{1}(\mathbb R^{3})\setminus\{0\}$ we have $tu\in \mathcal{N}_q^0$ if, and only if 
$\psi'_{q,tu}(1) = \psi''_{q,tu}(1)=0$
, i.e. the following system of equations is satisfied:
\begin{equation}\label{extremal}
\left\{
\begin{aligned}
%\psi'_{q,u}(t)=
&t\|u\|^2+q^2t^3\int \phi_u u^2-t^{p-1}\|u\|_p^p = 0, \\
%\psi''_{q,u}(t)=
 &\|u\|^2+3q^2t^2\int \phi_u u^2-(p-1)t^{p-2}\|u\|_p^p = 0.
\end{aligned}
\right.
\end{equation}
We can solve the system \eqref{extremal} with respect to the variables $q$ and $t$ to obtain a unique solution given by
\begin{equation*}%\label{eq:}
t(u) =\Big( \frac{2\|u\|^{2}}{(4-p) \|u\|_{p}^{p}}\Big)^{1/(p-2)}
\end{equation*}
and
\begin{equation}\label{extremalfun}
q(u)=C_p\frac{\|u\|_p^{p/(p-2)}}{\|u\|^{(4-p)/(p-2)} \|\phi_u\|_\mathcal{D}},
\qquad C_{p}=\frac{2 (p-2)^{1/2} \pi^{1/2}(4-p)^{(4-p)/2(p-2)}}{2^{1/(p-2)}}.
\end{equation}
%where $C_p=$ is a positive constant depending only on $p$.
In addition the solutions $q(u)$ and $t(u)$ are related by
\begin{equation*}%\label{tu}
t(u)=\left(\frac{2q^2(u)}{4\pi(p-2)}\frac{\|\phi_{u}\|_{\mathcal D}^{2}}{\|u\|_p^p}\right)^{{1}/(p-4)}.
\end{equation*}
 Define the extremal value (see Il'yasov \cite{ilyasENMM})
\begin{equation*}
q^*=\sup\left\{q(u):\ u\in H^1(\mathbb{R}^3)\setminus\{0\} \right\}.
\end{equation*}

\begin{lemma}\label{extremalparamater} The function $H^1(\mathbb{R}^3)\setminus\{0\}\ni u\mapsto q(u)$ defined in \eqref{extremalfun} is $0$-homogeneous. Moreover $q^*<\infty$. 
\end{lemma}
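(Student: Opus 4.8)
The plan is to treat the two assertions separately. For the $0$-homogeneity I would simply substitute $tu$ (with $t\neq0$) into the formula \eqref{extremalfun} and track how each factor scales. Using $\|tu\|_p=|t|\,\|u\|_p$, $\|tu\|=|t|\,\|u\|$, and the identity $\phi_{tu}=t^2\phi_u$ noted above (whence $\|\phi_{tu}\|_{\mathcal D}=t^2\|\phi_u\|_{\mathcal D}$), the numerator acquires a factor $|t|^{p/(p-2)}$ while the denominator acquires $|t|^{(4-p)/(p-2)}\cdot t^2$. Since $\frac{4-p}{p-2}+2=\frac{p}{p-2}$, these powers cancel exactly and $q(tu)=q(u)$; this is a purely computational check with no real obstacle.

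For $q^*<\infty$ I would exploit the homogeneity just proved: it suffices to bound $q(u)$ over the unit sphere $\{\|u\|=1\}$. On this sphere the numerator $\|u\|_p^{p/(p-2)}$ is controlled by the Sobolev embedding $H^1(\mathbb R^3)\hookrightarrow L^p(\mathbb R^3)$, so the only way $q(u)$ could be unbounded is through the denominator $\|\phi_u\|_{\mathcal D}$ becoming arbitrarily small. The crucial tool to rule this out is Proposition \ref{ine1}, which after rearranging gives the lower bound
\[
\|\phi_u\|_{\mathcal D}\ \ge\ \pi\,\frac{\|u\|_3^3}{\|\nabla u\|_2}\ \ge\ \pi\,\|u\|_3^3,
\]
the last step using $\|\nabla u\|_2\le\|u\|=1$. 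Thus a small $\|\phi_u\|_{\mathcal D}$ forces $\|u\|_3$ to be small as well, and one must check that the numerator decays at least as fast.

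The decisive point is the precise rate. By interpolation between $L^2$ and $L^3$ one has $\|u\|_p\le\|u\|_2^{1-\theta}\|u\|_3^{\theta}$ with $\theta=3(p-2)/p\in(0,1]$, and on the unit sphere $\|u\|_2$ is bounded, so $\|u\|_p^{p/(p-2)}\le C\,\|u\|_3^{\theta p/(p-2)}$. The key algebraic identity is that $\theta\cdot\frac{p}{p-2}=3$ for every $p\in(2,3]$, so the numerator is bounded by $C\|u\|_3^3$; dividing by the lower bound $\pi\|u\|_3^3$ for the denominator, the dangerous factor $\|u\|_3^3$ cancels and $q(u)\le C_pC/\pi$, uniformly in $u$. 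Hence $q^*<\infty$. I expect the main (and really only) subtlety to be this exact matching of exponents: everything hinges on the interpolation exponent producing precisely the cubic power of $\|u\|_3$ that Proposition \ref{ine1} delivers, a coincidence that works exactly in the range $p\le 3$ (for $p=3$ it is trivial since $\|u\|_p=\|u\|_3$). No compactness or variational argument is needed.
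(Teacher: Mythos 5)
Your proof is correct and follows essentially the same route as the paper: there, too, the key step is the $L^2$--$L^3$ interpolation inequality combined with Proposition \ref{ine1}, with the same exponent bookkeeping, the only cosmetic difference being that the paper works with the homogeneous expressions directly (deriving $\|u\|_p^p\le C\|u\|^{4-p}\|\phi_u\|_{\mathcal D}^{p-2}$) rather than normalizing to the unit sphere $\|u\|=1$. The exponent identity you single out, $\theta\cdot\tfrac{p}{p-2}=3$, is exactly the cancellation that makes the paper's estimate close as well.
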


\begin{proof} That $q(u)$ is zero homogeneous is obvious. Let us prove that $q^*<\infty$. Indeed, once $p\in (2,3]$ we have from the interpolation inequality that, for all $u\in H^{1}(\mathbb R^{3})$ we have 
	\begin{equation}\label{rt}
	\|u\|_p^p\le \|u\|^{6-2p}\|u\|_3^{3p-6}.
		\end{equation}
Combining the inequality \eqref{rt} with the Proposition \ref{ine1} we conclude that
\begin{equation}\label{rt1}
\|u\|_p^p\le  C\|u\|^{6-2p}\|u\|^{(3p-6)/3}\|\phi_u\|_\mathcal{D}^{(3p-6)/3}=C\|u\|^{4-p}\|\phi_u\|_\mathcal{D}^{p-2}.
\end{equation}
for some constant $C>0$. It follows from \eqref{rt1} that 
\begin{equation*}
q(u)\le C\frac{\|u\|^{(4-p)/(p-2)}\|\phi_u\|_\mathcal{D}}{\|u\|^{(4-p)/(p-2)} \|\phi_u\|_\mathcal{D}}\le C
\end{equation*}
completing the proof.
\end{proof}

Another extremal value  which will be important for us is the one such that, for larger values of the parameter, the functional
is always non-negative. Let us start by   fixing $u\in H^{1}(\mathbb R^{3})\setminus\{0\}$ and
considering the system
\begin{equation*}%\label{zeroene}
\left\{
\begin{aligned}
\psi_{q_0,u}(t_0)= \frac{t_0^2}{2}\|u\|^2+ q_0^2\frac{t_0^4}{4}\int \phi_u u^2-\frac{t_0^{p}}{p}\|u\|_p^p &= 0, \\
\psi'_{q_0,u}(t_0)= t_0\| u\|^2+q_0^2t_0^3\int \phi_u u^2-t_0^{p-1}\|u\|_p^p &= 0.
\end{aligned}
\right.
\end{equation*}
One can solve this system with respect to the variables $t_0$ and $q_0$ to obtain the unique solution given by
\begin{equation}\label{zeroenergy}
q_0(u)=C_{0,p}\frac{\|u\|_p^{p/(p-2)}}{\|u\|^{(4-p)/(p-2)} \|\phi_u\|_\mathcal{D}}, \qquad
C_{0,p} =\frac{2^{3/2} (p-2)^{1/2} \pi^{1/2}(4-p)^{(4-p)/2(p-2)}}{p^{1/(p-2)}}
\end{equation}
and $t_{0}(u)$ is given by
\begin{equation*}
t_0(u)=\left(\frac{pq_0^2(u)}{2(p-2)}\frac{\|\phi_u\|_{\mathcal D}^2}{\|u\|_p^p}\right)^{1/(p-4)}.
\end{equation*}
Observe that $C_{0,p}<C_p$, where $C_{p}$
is the one appearing in \eqref{extremalfun}. Then $q_{0}(u)< q(u)$. Define the extremal value as
\begin{equation*}
q_0^*=\sup\left\{q_0(u):\ u\in H^1(\mathbb{R}^3)\setminus\{0\} \right\}.
\end{equation*}

\begin{remark}\label{extremalparameter1}Once $q_0(u)$ is a multiple of $q(u)$, Lemma \ref{extremalparamater} also holds true for the function $q_0$.
\end{remark}

The solutions $q(u)$ and $q_0(u)$ given in \eqref{extremalfun}  and \eqref{zeroenergy} have the following geometrical interpretation which can be proved starting from the Proposition \ref{fiberingmaps}.
\begin{proposition}\label{fiberingvariation} For each $u\in H^1(\mathbb{R}^3)\setminus\{0\}$ there holds:
	\begin{enumerate}[label=(\roman*),ref=\roman*]
		\item\label{i1} $q(u)$ is the unique parameter $q>0$ for which the fiber map $\psi_{q,u}$ has a critical point with second derivative zero at $t(u)$. Moreover, if $0<q<q(u)$, then $\psi_{q,u}$ satisfies \eqref{i1fibering} of the Proposition \ref{fiberingmaps} while if $q>q(u)$, then $\psi_{q,u}$ satisfies \eqref{i3fibering} of the Proposition \ref{fiberingmaps}. \medskip
		\item\label{i2} $q_0(u)$ is the unique parameter $q>0$ for which the fiber map $\psi_{q,u}$ has a critical point with zero energy at $t_0(u)$. Moreover, if $0<q<q_0(u)$, then $\inf_{t>0}\psi_{q,u}(t)<0$ while if $q>q_{0}(u)$, then $\inf_{t>0}\psi_{q,u}(t)=0$.
	\end{enumerate}
\end{proposition}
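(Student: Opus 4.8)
The plan is to reduce everything to the elementary analysis of one real function of $t$, and to read off the three shapes of $\psi_{q,u}$ together with the sign of its infimum from how the maximum of that function moves as $q$ varies. Throughout fix $u\in H^1(\mathbb R^3)\setminus\{0\}$ and abbreviate $A=\|u\|^2$, $B=\int\phi_u u^2$ and $D=\|u\|_p^p$, all strictly positive (recall $4\pi B=\|\phi_u\|_{\mathcal D}^2>0$). Since $\psi_{q,u}(t)=\frac{A}{2}t^2+\frac{q^2B}{4}t^4-\frac{D}{p}t^p$, dividing $\psi_{q,u}'(t)=At+q^2Bt^3-Dt^{p-1}$ by $t>0$ shows that the positive critical points of $\psi_{q,u}$ are exactly the solutions of $g(t)=A$, where $g(t):=Dt^{p-2}-q^2Bt^2$. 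Substituting $g(t)=A$ into $\psi_{q,u}''$ at such a point yields the identity $\psi_{q,u}''(t)=-t\,g'(t)$, so at a positive critical point the sign of $\psi_{q,u}''$ is opposite to that of $g'$, and $\psi_{q,u}''=0$ there precisely when $g'=0$.

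For item \eqref{i1} I would analyse $g$ on $(0,\infty)$. Because $2>p-2>0$, the function $g$ increases and then decreases, with a unique maximum at the point where $g'=0$, of value $M(q)>0$; hence $g(t)=A$ has two, one or no positive roots according as $A<M(q)$, $A=M(q)$ or $A>M(q)$, which are precisely cases \eqref{i1fibering}, \eqref{i2fibering} and \eqref{i3fibering} of Proposition \ref{fiberingmaps} (by $\psi_{q,u}''=-t\,g'$ the smaller root lies on the increasing branch $g'>0$, where $\psi''<0$, giving the local maximum $t_q^-(u)$, and the larger root on the decreasing branch, giving the local minimum $t_q^+(u)$). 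The key quantitative step is that $M$ is continuous and strictly decreasing in $q$, with $M(0^+)=+\infty$ and $M(+\infty)=0$: this follows from $\partial g/\partial(q^2)=-Bt^2<0$ together with the envelope relation $M(q)=\tfrac{4-p}{2}D\,t_{\max}(q)^{p-2}$ and $t_{\max}(q)\to\infty$ as $q\to0^+$, $t_{\max}(q)\to0$ as $q\to\infty$. Consequently there is a unique $q>0$ with $M(q)=A$; at such $q$ the map $\psi_{q,u}$ has a critical point with $\psi''=0$, so by the explicit solution of \eqref{extremal} this $q$ equals $q(u)$ and the tangency point is $t(u)$. Monotonicity of $M$ then gives $M(q)>A$ (case \eqref{i1fibering}) for $0<q<q(u)$ and $M(q)<A$ (case \eqref{i3fibering}) for $q>q(u)$, which is item \eqref{i1}.

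For item \eqref{i2} I would use the factorisation $\psi_{q,u}(t)=t^2\big(\tfrac{A}{2}-\tilde g(t)\big)$ with $\tilde g(t):=\tfrac{D}{p}t^{p-2}-\tfrac{q^2B}{4}t^2$, which has the same qualitative shape as $g$. Thus $\inf_{t>0}\psi_{q,u}<0$ holds if and only if $\tilde g$ exceeds $A/2$ somewhere, i.e. $\max_{t>0}\tilde g>A/2$, while differentiating the factorisation shows that the system $\psi_{q,u}(t)=\psi_{q,u}'(t)=0$ is equivalent to $\tilde g(t)=A/2$ and $\tilde g'(t)=0$, that is to $\max_{t>0}\tilde g=A/2$ attained at that $t$. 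The same envelope argument shows $q\mapsto\max_{t>0}\tilde g$ is continuous, strictly decreasing and onto $(0,\infty)$, so there is a unique $q$ realising $\max\tilde g=A/2$; it must equal the solution $q_0(u)$ of the corresponding system, with realising point $t_0(u)$. Strict monotonicity then gives $\max\tilde g>A/2$, hence $\inf_{t>0}\psi_{q,u}<0$, exactly for $0<q<q_0(u)$, and $\max\tilde g\le A/2$, hence $\inf_{t>0}\psi_{q,u}=0$, for $q>q_0(u)$ (the infimum being attained at the local minimum $t_q^+(u)$ when $q\le q_0(u)$, since $q_0(u)<q(u)$ keeps us in case \eqref{i1fibering}, and otherwise equal to the limit $0$ as $t\to0^+$). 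This is item \eqref{i2}.

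I expect the main obstacle to be the quantitative claim that the maximum value $M(q)$ (respectively $\max_t\tilde g$) is strictly decreasing and surjects onto $(0,\infty)$, since the entire switching between regimes rests on it; establishing this cleanly via the envelope formula is where care is needed. A secondary point requiring attention is the bookkeeping that matches the abstract thresholds with the explicit expressions \eqref{extremalfun} and \eqref{zeroenergy}, and the verification that the zero-energy critical point is the local minimum $t_q^+(u)$ rather than the local maximum (which has strictly positive energy, lying above $\psi_{q,u}(0)=0$).
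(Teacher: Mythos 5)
Your proposal is correct, and it follows the route the paper itself indicates but leaves unwritten: the paper offers no explicit proof, merely asserting the proposition ``can be proved starting from Proposition \ref{fiberingmaps}'' together with the explicit solutions \eqref{extremalfun} and \eqref{zeroenergy} of the two systems. Your reduction of the critical-point and zero-energy conditions to the equations $g(t)=A$ and $\tilde g(t)=A/2$, with $g(t)=Dt^{p-2}-q^2Bt^2$ and $\tilde g(t)=\tfrac{D}{p}t^{p-2}-\tfrac{q^2B}{4}t^2$, and the observation that their maxima are continuous, strictly decreasing bijections of $q$ onto $(0,\infty)$ (via the envelope identity $M(q)=\tfrac{4-p}{2}D\,t_{\max}(q)^{p-2}$), is exactly the ``simple analysis'' the paper has in mind, and it correctly recovers both the uniqueness of $q(u)$, $q_0(u)$ and the regime switches at those values.
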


Moreover the parameter $q_0^*$ has the geometrical interpretation that for each $q\in (0,q_0^*)$, there exists at least one $ u\in H^1(\mathbb{R}^3)\setminus\{0\}$ for which $\mathcal{J}_q(u)<0$, while if $q\ge q_0^*$, then $\mathcal{J}_q(u)\ge 0$ for all $ u\in H^1(\mathbb{R}^3)$. In  both works \cite{GaeDa,Ruiz}  the necessity of small values of $q$ was imposed in order to show that there exists a function where the functional is negative, in such a way that
$\mathcal{J}_q$ possesses a Mountain Pass Geometry. 
Therefore, the argument employed in both papers do not work
for  $q> q_0^*$.

The above proposition has the following important consequences. 
\begin{corollary}\label{nehari}
	 %Suppose that $p\in (2,3)$, then 
	 If $q>q^*$ the functional $\mathcal{J}_q$ has no critical points other then the zero function. Moreover if $q<q^*$, then $\mathcal{N}_q^-\neq\emptyset$ and $\mathcal{N}_q^+\neq\emptyset$.
\end{corollary}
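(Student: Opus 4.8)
The plan is to deduce both claims directly from the fibering dichotomy recorded in Proposition \ref{fiberingvariation}\eqref{i1}, the defining property of $q^*$ as a supremum, and the elementary scaling identity $\psi_{q,tu}(s)=\psi_{q,u}(st)$ for $t>0$.

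For item one I would argue by contradiction. Suppose $u\neq0$ were a critical point of $\mathcal J_q$. Since $\mathcal J_q'(u)=0$, differentiating $t\mapsto\mathcal J_q(tu)=\psi_{q,u}(t)$ at $t=1$ gives $\psi_{q,u}'(1)=0$, so $t=1$ is a positive critical point of the fiber map $\psi_{q,u}$ and in particular $u\in\mathcal N_q$. On the other hand, by Lemma \ref{extremalparamater} and the definition of $q^*$ we have $q(u)\le q^*<q$. Hence $q>q(u)$, and Proposition \ref{fiberingvariation}\eqref{i1} forces $\psi_{q,u}$ into case \eqref{i3fibering} of Proposition \ref{fiberingmaps}, where $\psi_{q,u}$ is strictly increasing with no positive critical point. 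This contradicts $\psi_{q,u}'(1)=0$, so no nontrivial critical point can exist.

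For item two I would use the supremum directly. If $q<q^*$, then $q$ fails to be an upper bound for $\{q(u):u\neq0\}$, so I can pick $u_0\neq0$ with $q<q(u_0)$. By Proposition \ref{fiberingvariation}\eqref{i1} the map $\psi_{q,u_0}$ then lies in case \eqref{i1fibering}, giving two positive critical points $0<t_q^-(u_0)<t_q^+(u_0)$, the first a local maximum with negative second derivative, the second a local minimum with positive second derivative. I would then transport these to the Nehari manifold by setting $v^\mp:=t_q^\mp(u_0)\,u_0$: from $\psi_{q,v^\mp}(s)=\psi_{q,u_0}(s\,t_q^\mp(u_0))$ one gets $\psi_{q,v^\mp}'(1)=t_q^\mp(u_0)\,\psi_{q,u_0}'(t_q^\mp(u_0))=0$, so $v^\mp\in\mathcal N_q$, while $\psi_{q,v^\mp}''(1)=(t_q^\mp(u_0))^2\,\psi_{q,u_0}''(t_q^\mp(u_0))$ inherits the sign of $\psi_{q,u_0}''(t_q^\mp(u_0))$. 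Thus $v^-\in\mathcal N_q^-$ and $v^+\in\mathcal N_q^+$, and both sets are nonempty.

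I do not expect a genuine obstacle, since the substantive analysis is already packaged in Proposition \ref{fiberingvariation} and the corollary is essentially bookkeeping. The two points that require a little care are, in item one, the reduction of ``critical point of $\mathcal J_q$'' to ``critical point of $\psi_{q,u}$ at $t=1$'' (so that the fiber-map trichotomy applies and excludes case \eqref{i3fibering}), and, in item two, the correct use of the scaling identity to move the abstract critical points $t_q^\mp(u_0)$ into genuine elements of $\mathcal N_q^\mp$; here the key observation is that the sign of the second derivative is preserved, being merely multiplied by the positive factor $(t_q^\mp(u_0))^2$.
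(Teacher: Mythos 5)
Your proposal is correct and takes essentially the same route as the paper's proof: both parts rest on combining the supremum definition of $q^*$ with item \eqref{i1} of Proposition \ref{fiberingvariation}. The only difference is that you spell out the bookkeeping the paper leaves implicit, namely the reduction of $\mathcal{J}_q'(u)=0$ to $\psi_{q,u}'(1)=0$ and the scaling argument showing $t_q^\mp(u_0)\,u_0\in\mathcal{N}_q^\mp$.
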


In particular item 1. of Theorem \ref{T1} is proved.
\begin{proof} 
%Indeed, we claim that if $q>q^*$, for each $u\in H^{1}(\mathbb R^{3})\setminus\{0\}$
%the function $\psi_{q,u}$ has no critical points.  This is a consequence of the inequality $q(u)\le q^*<q$ and the %Proposition \ref{fiberingvariation}.
It is sufficient to show that, for each  $u\in H^{1}(\mathbb R^{3})\setminus\{0\}$,
the function $\psi_{q,u}$ has no critical points for $q>q^{*}$.
Actually this is a consequence of the inequalities $q(u)\le q^*<q$ and 
\eqref{i1} of  Proposition \ref{fiberingvariation}.

Now assume that $q<q^*$. From the definition of $q^*$, there exists $u\in H^1(\mathbb{R}^3)\setminus\{0\}$ such that $q<q(u)<q^*$. Therefore, from \eqref{i1} of Proposition \ref{fiberingvariation} we conclude that $\mathcal{N}_q^-\neq\emptyset$ and $\mathcal{N}_q^+\neq\emptyset$.
\end{proof}

\begin{corollary}\label{zeroenergylevel} For each $q\ge q_0^*$, there holds $\mathcal{J}_q(u)\ge 0$ for all $u\in H^1(\mathbb{R}^3)$. Moreover, if $q<q_0^*$, then there exists $u\in H^1(\mathbb{R}^3)$ such that $\mathcal{J}_q(u)<0$.
\end{corollary}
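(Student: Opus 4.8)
The plan is to deduce both halves of the corollary directly from the fiberwise picture already established in Proposition \ref{fiberingvariation} \eqref{i2}, together with the definition of $q_0^*$ as a supremum. The only bridge needed is the identity $\mathcal{J}_q(u)=\psi_{q,u}(1)$ and the elementary remark that every term of $\psi_{q,u}(t)=\frac{t^2}{2}\|u\|^2+\frac{q^2t^4}{4}\int\phi_u u^2-\frac{t^p}{p}\|u\|_p^p$ vanishes as $t\to 0^+$, so that $\inf_{t>0}\psi_{q,u}(t)\le 0$ for every $u\neq 0$ and every $q$. Hence proving $\mathcal{J}_q(u)\ge 0$ reduces to showing $\inf_{t>0}\psi_{q,u}(t)=0$, i.e. that the whole fiber through $u$ sits at or above the level zero.

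For the first statement I would fix $q\ge q_0^*$ and argue one fiber at a time. The case $u=0$ is trivial because $\mathcal{J}_q(0)=0$, so I take $u\neq 0$. Since $q_0^*$ is the supremum of the numbers $q_0(u)$, we automatically have $q_0(u)\le q_0^*\le q$. When $q>q_0(u)$, item \eqref{i2} of Proposition \ref{fiberingvariation} gives $\inf_{t>0}\psi_{q,u}(t)=0$, so in particular $\mathcal{J}_q(u)=\psi_{q,u}(1)\ge 0$; as $u$ was arbitrary, this already yields $\mathcal{J}_q\ge 0$ on $H^1(\mathbb{R}^3)$ away from the borderline fibers, which I treat below.

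For the second statement I would exploit that $q<q_0^*=\sup\{q_0(v)\}$ forces the existence of some $v\in H^1(\mathbb{R}^3)\setminus\{0\}$ with $q<q_0(v)$. Item \eqref{i2} of Proposition \ref{fiberingvariation} then gives $\inf_{t>0}\psi_{q,v}(t)<0$, so there is a $t^*>0$ with $\psi_{q,v}(t^*)<0$; putting $u:=t^*v$ produces the desired function, since $\mathcal{J}_q(u)=\psi_{q,v}(t^*)<0$.

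I expect the only real obstacle to be the borderline fibers where $q=q_0(u)$ (which can occur only if $q_0(u)=q_0^*=q$), since item \eqref{i2} of Proposition \ref{fiberingvariation} describes $q>q_0(u)$ and $q<q_0(u)$ but is silent at equality. To close this gap I would invoke the shape of the fiber from Proposition \ref{fiberingmaps}: because $q_0(u)<q(u)$, at $q=q_0(u)$ the map $\psi_{q,u}$ is still in the two-critical-point regime \eqref{i1fibering}, rising from $0$ to a positive local maximum at $t_q^-(u)$ and then descending to its local minimum, which at this value of $q$ is precisely $t_0(u)$ and, by the defining property of $q_0(u)$, carries zero energy, $\psi_{q_0(u),u}(t_0(u))=0$. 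Since $\psi_{q,u}$ increases on $(0,t_q^-(u))$ and on $(t_0(u),\infty)$, one gets $\psi_{q,u}(t)\ge 0$ for all $t>0$, so $\mathcal{J}_q(u)=\psi_{q,u}(1)\ge 0$ on these fibers too, completing the argument.
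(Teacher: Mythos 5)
Your proof is correct and follows the same basic route as the paper's: both halves are read off from item \eqref{i2} of Proposition \ref{fiberingvariation} together with the definition of $q_0^*$ as a supremum (your second half is essentially verbatim the paper's argument). Where you differ is in the treatment of the borderline fibers with $q=q_0(u)$, and this is a genuine improvement rather than pedantry. The paper's proof of the first half simply asserts that $q\ge q_0^*$ implies $q>q_0(u)$ for every $u\neq 0$; this strict inequality does not follow from the definition of a supremum when the supremum is attained, and it is in fact attained, since Corollary \ref{Mq_0} and Remark \ref{RMK} later produce $u_{q_0^*}\neq 0$ with $q_0(u_{q_0^*})=q_0^*$, so at $q=q_0^*$ the strict inequality fails on precisely that fiber --- and $q=q_0^*$ is exactly the case of the corollary that the paper goes on to use in proving Corollary \ref{Mq_0}. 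Your patch is sound: since $q_0(u)<q(u)$, at $q=q_0(u)$ the fiber map is still in regime \eqref{i1fibering} of Proposition \ref{fiberingmaps}; it rises from $0$ to a strictly positive local maximum, so the zero-energy critical point furnished by the definition of $q_0(u)$ must be the local minimum $t_0(u)$, and the resulting shape forces $\psi_{q,u}\ge 0$ on $(0,\infty)$, hence $\mathcal{J}_q(u)=\psi_{q,u}(1)\ge 0$. For what it is worth, an even shorter patch is available: for fixed $u\neq 0$ the map $q\mapsto \mathcal{J}_q(u)$ is continuous (and nondecreasing in $q>0$), so $\mathcal{J}_{q_0^*}(u)=\lim_{q\downarrow q_0^*}\mathcal{J}_q(u)\ge 0$, the limit running over values of $q$ already covered by the strict case.
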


\begin{proof} Indeed, assume that  $q\ge q_0^*$. It follows that $q>q_0(u)$ for each  $u\in H^1(\mathbb{R}^3)\setminus\{0\}$ and from item \eqref{i2} of Proposition \ref{fiberingvariation}  there holds $\inf_{t>0}\psi_{q,u}(t)=0$. Therefore $\mathcal{J}_q(u)\ge 0$.
	
	Now assume that $q<q_0^*$. From the definition of $q_0^*$, there exists $w\in H^1(\mathbb{R}^3)\setminus\{0\}$ such that $q<q_0(w)<q_0^*$. Threfore, from \eqref{i2} of Proposition \ref{fiberingvariation} we conclude that $\inf_{t>0}\psi_{q,w}(t)<0$ and hence there exists $t>0$ such that if $u:= tw$, it holds $\mathcal{J}_q(u)<0$.
\end{proof}

%Now we study the Nehari set $\mathcal{N}_q^0$.
%\begin{proposition}\label{N0ene} There exists a positive constant $m_q$ such that $\mathcal{J}_q(u)\ge m_q$ for each $u\in \mathcal{N}_q^0$.
%\end{proposition}
%\begin{proof} From the equations \eqref{extremal} with $t=1$ we have that 	
%	\begin{equation}\label{extremal1}
%	\left\{
%	\begin{aligned}
%	\|u\|^2+q^2\int \phi_u u^2-\|u\|_p^p &= 0, \\
%	\|u\|^2+3q^2\int \phi_u u^2-(p-1)\|u\|_p^p &= 0.
%	\end{aligned}
%	\right.
%	\end{equation}
%It follows from \eqref{extremal1} that $\mathcal{J}_q(u)=D_q \|u\|^2$ for each $u\in \mathcal{N}_q^0$ and some positive constant $D_q$. From the Proposition \ref{awayzero} the proof is completed.
%\end{proof}

Let us conclude this section with the following important result.
%Since $p\in(2,3]$ the functional $\mathcal J_{q}$ is not bounded below on $\mathcal N_{q}$.
%Nevertheless we have the following.

\begin{proposition}\label{N0ene} There exists a positive constant $m$ such that 
$$\forall q\in \mathbb R, u\in \mathcal{N}_q^0 :  \mathcal{J}_q(u)\ge m.$$
\end{proposition}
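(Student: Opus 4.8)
The plan is to exploit the fact that membership in $\mathcal{N}_q^0$ is governed by \emph{two} scalar equations, $\psi_{q,u}'(1)=0$ and $\psi_{q,u}''(1)=0$, and to use them to eliminate both the $L^p$-term and the nonlocal term in favour of $\|u\|^2$. This will produce an explicit formula for $\mathcal{J}_q(u)$ on $\mathcal{N}_q^0$ depending only on $\|u\|^2$, with a coefficient independent of $q$; the uniform lower bound \eqref{eq:bddaway} on the Nehari manifolds then finishes the argument. (If $\mathcal{N}_q^0=\emptyset$ the statement is vacuously true, so we may assume it is nonempty.)

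Concretely, writing $A=\|u\|^2$, $B=q^2\int \phi_u u^2$ and $C=\|u\|_p^p$, the two conditions defining $\mathcal{N}_q^0$ (that is, the system \eqref{extremal} read with $t=1$) become $A+B-C=0$ and $A+3B-(p-1)C=0$. This is a linear system in the unknowns $B$ and $C$; solving it gives $C=\tfrac{2}{4-p}\,A$ and $B=\tfrac{p-2}{4-p}\,A$, both meaningful and strictly positive since $p\in(2,3]$. Substituting into $\mathcal{J}_q(u)=\psi_{q,u}(1)=\tfrac12 A+\tfrac14 B-\tfrac1p C$ and simplifying (the resulting numerator factors as $(p-2)(4-p)$) yields
\[
\mathcal{J}_q(u)=\frac{p-2}{4p}\,\|u\|^2 \qquad \text{for every } u\in\mathcal{N}_q^0.
\]

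The crucial feature of this identity is that the coefficient $\tfrac{p-2}{4p}$ is strictly positive and, importantly, independent of $q$. Since $\mathcal{N}_q^0\subset \mathcal{N}_q$, the uniform estimate \eqref{eq:bddaway} gives $\|u\|\ge \widetilde C$ for all such $u$, whence
\[
\mathcal{J}_q(u)\ge \frac{p-2}{4p}\,\widetilde C^{\,2}=:m>0
\]
for every $q\in\mathbb{R}$ and every $u\in\mathcal{N}_q^0$, as claimed. There is no serious obstacle here: the only delicate point is the $q$-uniformity of $m$, and this is resolved precisely because the elimination step removes both $B$ and $C$, leaving a coefficient carrying no dependence on $q$, while positivity is secured by \eqref{eq:bddaway}.
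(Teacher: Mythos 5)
Your proof is correct and follows essentially the same route as the paper: both solve the linear system coming from $\psi_{q,u}'(1)=\psi_{q,u}''(1)=0$ to express $q^2\int\phi_u u^2$ and $\|u\|_p^p$ in terms of $\|u\|^2$, arrive at the identity $\mathcal{J}_q(u)=\frac{p-2}{4p}\|u\|^2$ on $\mathcal{N}_q^0$, and conclude with the $q$-uniform bound \eqref{eq:bddaway}. No changes needed.
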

\begin{proof} From the equations \eqref{extremal} with $t=1$ we have that 	
	\begin{equation*}%\label{extremal1}
	\left\{
	\begin{aligned}
	\|u\|^2+q^2\int \phi_u u^2-\|u\|_p^p &= 0, \\
	\|u\|^2+3q^2\int \phi_u u^2-(p-1)\|u\|_p^p &= 0.
	\end{aligned}
	\right.
	\end{equation*}
It follows that $$q^{2}\int\phi_{u}u^{2}=\|u\|_{p}^{p} - \|u\|^{2} \quad \text{ and }\quad \|u\|_{p}^{p} = \frac{2}{4-p}\|u\|^{2}$$
so that $\mathcal{J}_q(u)=\frac{p-2}{4p}\|u\|^2$ for each $u\in \mathcal{N}_q^0$.
From \eqref{eq:bddaway}  the proof is completed.
\end{proof}

It is worth to point out that all that we have done in this section does not use the radial setting,
and clearly these results also holds in $H^{1}_{r}(\mathbb R^{3})$.

\section{Global Minima and (PS) Sequences for $\mathcal{J}_q$}\label{sec:PS}
In this section we prove the following result. Here and in the next section is fundamental 
to work with radial functions.
\begin{proposition}\label{MPS} There holds:
	\begin{enumerate}[label=(\roman*),ref=\roman*]
		\item\label{iMPS} for each $q\in (0,q_0^*)$ we have that $-\infty<\inf_{u\in H^1_{r}(\mathbb{R}^3)}\mathcal{J}_q(u)<0$;
		\item\label{iiMPS} for each $q> 0$, if $\{u_n\}\subset H_r^1(\mathbb{R}^3)$ is a sequence such that $\mathcal{J}'_q(u_n)\to 0$ as $n\to \infty$, then $\{u_n\}$ is convergent, up to subsequences.
	\end{enumerate}
\end{proposition}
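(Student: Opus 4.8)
The plan is to treat both items by the same circle of ideas, exploiting that in $H^1_r(\mathbb{R}^3)$ the embedding into $L^s(\mathbb{R}^3)$ is compact for every $s\in(2,6)$, together with Proposition \ref{ine1} and its consequence \eqref{rt1}.

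For item \eqref{iMPS} the upper bound $\inf_{H^1_r}\mathcal J_q<0$ is immediate from Corollary \ref{zeroenergylevel}, so the content is the lower bound $\inf_{H^1_r}\mathcal J_q>-\infty$. First I would reduce to the unit sphere: writing $u=tv$ with $\|v\|=1$ gives $\mathcal J_q(u)=\psi_{q,v}(t)$, so that
\[
\inf_{u\in H^1_r}\mathcal J_q(u)=\inf_{\|v\|=1}\ \min_{t\ge0}\psi_{q,v}(t).
\]
A direct scaling of the fiber map shows that $\min_{t\ge0}\psi_{q,v}(t)=\tfrac{1}{q^2\gamma_v}\,G(K(v))$, where $\gamma_v=\int\phi_v v^2$, the profile $G\le0$ vanishes below a universal threshold and is negative above it, and $K(v)$ depends only on the ratio $q_0(v)/q$ (with $q_0(v)$ as in \eqref{zeroenergy}); in particular $\min_t\psi_{q,v}<0$ exactly when $q_0(v)>q$, by \eqref{i2} of Proposition \ref{fiberingvariation}. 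Hence the infimum can be driven to $-\infty$ only along a sequence $v_n$ with $\gamma_{v_n}\to0$ on which $q_0(v_n)$ stays above $q$. I would exclude this by compactness: if $v_n\rightharpoonup0$ in $H^1_r$ then $v_n\to0$ in every $L^s$, $s\in(2,6)$, whence $\|\phi_{v_n}\|_{\mathcal D}\to0$ (an elementary Sobolev estimate gives $\|\phi_v\|_{\mathcal D}\le C\|v\|_{12/5}^2$) and, via Proposition \ref{ine1} and interpolation, $\|v_n\|_p\to0$; I would then show the quotient defining $q_0(v_n)$ tends to $0$, so $q_0(v_n)<q$ eventually and $\min_t\psi_{q,v_n}=0$, contradicting the blow-up.

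Item \eqref{iiMPS} splits into boundedness of $\{u_n\}$ and the upgrade to strong convergence. The heart of the matter is boundedness: evaluating $\mathcal J_q'(u_n)\to0$ at $u_n$ gives $\|u_n\|^2+\tfrac{q^2}{4\pi}\|\phi_{u_n}\|_{\mathcal D}^2=\|u_n\|_p^p+o(\|u_n\|)$, and if $\|u_n\|\to\infty$ then, with $v_n=u_n/\|u_n\|$, this identity forces $\|v_n\|_p\to0$, hence $v_n\rightharpoonup0$ and $v_n\to0$ in every $L^s$, $s\in(2,6)$. As in item \eqref{iMPS}, I would use Proposition \ref{ine1}, \eqref{rt1} and the compact embeddings to show this vanishing regime is incompatible with $\mathcal J_q'(u_n)\to0$ — the almost-critical radius $\|u_n\|$ would have to sit at the receding local minimum of a fiber whose quartic barrier $\gamma_{v_n}$ degenerates — yielding $\sup_n\|u_n\|<\infty$. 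Once boundedness holds the conclusion is standard: up to a subsequence $u_n\rightharpoonup u$ in $H^1_r$ with $u_n\to u$ in $L^p$ and $\int\phi_{u_n}u_n(u_n-u)\to0$ by compactness, so evaluating $\mathcal J_q'(u_n)\to0$ at $u_n-u$ and using weak convergence gives $\|u_n-u\|^2\to0$.

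The main obstacle is precisely the exclusion of this vanishing/blow-up scenario, and it is here that the radial setting is indispensable, since in $H^1(\mathbb{R}^3)$ the embeddings fail to be compact and such sequences cannot be ruled out from the norm identities alone. I expect the endpoint $p=3$ to be the most delicate point: there the estimate of Proposition \ref{ine1} and the interpolation \eqref{rt} become scale-invariant, so controlling the rate at which $q_0(v_n)\to0$ (equivalently, quantifying the loss in \eqref{rt1} along vanishing sequences) should require the full strength of the compact embedding rather than these inequalities in isolation.
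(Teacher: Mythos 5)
Your preliminary reductions are correct as far as they go: writing $\gamma_v=\int\phi_v v^2$, the rescaling $\min_{t\ge0}\psi_{q,v}(t)=\frac{1}{q^2\gamma_v}G(K(v))$ with $K(v)$ depending only on $q_0(v)/q$, the conclusion (using Lemma \ref{extremalparamater} to keep $K$ bounded) that blow-up of the infimum forces $\gamma_{v_n}\to0$ with $q_0(v_n)>q$, and the compactness step giving $v_n\to 0$ in $L^s$ for $s\in(2,6)$, $\|\phi_{v_n}\|_{\mathcal D}\to0$ and $\|v_n\|_p\to0$ along weakly null radial sequences. The genuine gap is the sentence ``I would then show the quotient defining $q_0(v_n)$ tends to $0$.'' This is not a routine verification: $q_0(v_n)$ is a ratio of two quantities that \emph{both} vanish, qualitative $L^s$-convergence carries no rate information, and the inequalities you invoke (Proposition \ref{ine1} and \eqref{rt1}) yield exactly the \emph{boundedness} of this ratio --- that is verbatim the proof of Lemma \ref{extremalparamater} --- and nothing finer. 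Worse, the implication ``$v_n\rightharpoonup0$ in $H^1_r(\mathbb{R}^3)$, $\|v_n\|=1$ $\Rightarrow q_0(v_n)\to0$'' is \emph{false} at the endpoint $p=3$: take $a=0$ (where Proposition \ref{ine1} is \cite[Formula (19)]{Ruiz}, so every preliminary you quote is valid), fix a nonzero radial $w\in C_c^\infty(\mathbb{R}^3)$ and set $u_n(x)=n^{1/2}w(nx)$, $v_n=u_n/\|u_n\|$. Then $\|u_n\|\to\|\nabla w\|_2$, $v_n\rightharpoonup0$, $v_n\to0$ in every $L^s$ with $s\in(2,6)$, and $\gamma_{v_n}=O(n^{-3})\to0$; but $\|u_n\|_3^3=n^{-3/2}\|w\|_3^3$ and $\|\phi_{u_n}\|_{\mathcal D}=2(\pi\gamma_w)^{1/2}n^{-3/2}$ decay at the \emph{same} rate, where $\gamma_w=\int\!\!\int |x-y|^{-1}w^2(x)w^2(y)\,dx\,dy$, so by $0$-homogeneity
\begin{equation*}
q_0(v_n)=q_0(u_n)\ \longrightarrow\ \frac{\sqrt2\,\|w\|_3^3}{3\,\|\nabla w\|_2\,\gamma_w^{1/2}}\ >\ 0 .
\end{equation*}
Thus the degenerate regime you must exclude is actually \emph{realized} by concentrating radial bumps, and no argument based solely on compactness of $H^1_r\hookrightarrow L^s$ can rule it out. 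The same missing mechanism is the heart of your item \eqref{iiMPS}: boundedness of the sequence is again reduced to ``this vanishing regime is incompatible with $\mathcal J_q'(u_n)\to0$'' with no actual argument, and the family above satisfies every qualitative conclusion that the compact embeddings deliver.

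This is exactly where the paper's proof differs in kind, not in detail: it never uses compactness of the radial embedding in this proposition. Proposition \ref{ine1} is used to absorb the cubic term, giving $\mathcal J_q(u)\ge\frac14\|u\|^2+D\|\phi_u\|_{\mathcal D}^2+\int f(u)$ with $f<0$ only on a fixed band $\{\alpha<|u|<\beta\}$, and then two \emph{spatially quantitative} tools enter: the Strauss pointwise bound $|u(x)|\le C|x|^{-1}\|u\|$, which forces the level set $A=\{\alpha<|u|<\beta\}$ to have radius controlled by $\mathrm{meas}(A)^{1/2}$, and a lower bound for $\|\phi_u\|_{\mathcal D}^2$ obtained from the explicit kernel restricted to $A\times A$; item \eqref{iiMPS} is handled by the same mechanism applied to $\mathcal J_q'(u_n)[u_n]$. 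It is precisely this decay/kernel-positivity information --- \emph{where} the mass of $u_n$ sits, not merely which norms vanish --- that defeats scaling families like the one above, and it is what your route lacks. Finally, your closing remark understates the difficulty at $p=3$: on the family above one computes $\mathcal J_q(\tau n^{3/2}v_n)=n^3\bigl[\frac{\tau^2}{2}+\frac{q^2\gamma_w}{4\|\nabla w\|_2^4}\tau^4-\frac{\|w\|_3^3}{3\|\nabla w\|_2^3}\tau^3\bigr]+O(n)$, which tends to $-\infty$ for suitable $\tau$ whenever $q<\sqrt2\|w\|_3^3/(3\|\nabla w\|_2\gamma_w^{1/2})$, a threshold that is $\ge q_0(w)$; so at $p=3$ the unbounded regime does not just escape your argument, it genuinely occurs on a nontrivial range of $q$ (note that the paper's own reduction also degenerates there, since its auxiliary function $f$ has infimum $-\infty$ when $p=3$ and $\varepsilon$ is small). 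In short, the proposal cannot be completed at the endpoint, and for $p\in(2,3)$ it would require a quantitative rate or profile analysis of $q_0$ along vanishing sequences which is not supplied.
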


%\begin{proposition}\label{min} For each $q\in (0,q_0^*)$ we have that $-\infty<\inf_{u\in H^1_r(\mathbb{R}^3)}\mathcal{J}_q(u)<0$.
%\end{proposition}
\begin{proof} 

Let us show \eqref{iMPS}.
 Indeed, from the Corollary \ref{zeroenergylevel} we know that $\inf_{u\in H^1_r(\mathbb{R}^3)}\mathcal{J}_q(u)<0$. We claim that $-\infty<\inf_{u\in H^1_{r}(\mathbb{R}^3)}\mathcal{J}_q(u)$. In fact, given $\varepsilon>0$ such that $D:=\frac{q^2}{16\pi}-\varepsilon^4>0$ , by  Proposition \ref{ine1}, we have that for each  $u\in H^1_r(\mathbb{R}^3)$ there holds
\begin{eqnarray}%\label{M1}
\mathcal{J}_q(u)&=&\frac{1}{4}\|\nabla u\|_2^2+\frac{1}{4}\|\nabla u\|_2^2+\frac{1}{2}\| u\|_2^2+\frac{q^2}{4}\int \phi_u u^2-\frac{1}{p}\|u\|_p^p \nonumber\\
                &\ge& \frac{1}{4}\|\nabla u\|_2^2+D\|\phi_u\|_\mathcal{D}^2+\frac{1}{2}\| u\|_2^2+\frac{\pi\varepsilon^2}{4}\|u\|_3^3-\frac{1}{p}\|u\|_p^p\nonumber\\
                &=&\frac{1}{4}\|u\|^{2}+D\|\phi_{u}\|_{\mathcal D}^{2}+\int f(u) \label{M2}
\end{eqnarray}
where
\begin{equation*}\label{ft}
f(t)=\frac{1}{4}t^2+\frac{\pi\varepsilon^2}{4}t^3-\frac{1}{p}t^p,\quad \forall\, t>0.
\end{equation*}
A simple analysis shows that $I:=\inf_{t>0}f(t)>-\infty$ and if $f(t)<0$ for some $t>0$, then $f^{-1}((-\infty,0))=(\alpha,\beta)$, where $0<\alpha<\beta<\infty$.
% Note from \eqref{M1} that 
%\begin{equation}\label{ft1}
%\mathcal{J}_q(u)\ge \frac{\|u\|^2}{4}+D_q\|\phi_u\|_\mathcal{D}^2+\int f(u),\quad \forall\, u\in H^1_r(\mathbb{R}^3). 
%\end{equation}

If $I\ge 0$, being $D_q>0$,  from  \eqref{M2} we conclude that $-\infty<\inf_{u\in H^1_r(\mathbb{R}^3)}\mathcal{J}_q(u)$. 

If $I<0$ then
\begin{equation}\label{eq:bo}
\mathcal J_{q}(u)\geq\frac{1}{4}\|u\|^{2}+D_{q}\|\phi_{u}\|_{\mathcal D}^{2}+I\,{\sl meas}(A)
\end{equation}
where $A=\{ x\in \mathbb R^{3}: u(x)\in (\alpha,\beta)\}$.
If there exists a sequence $\{u_n\}\subset H^1_r(\mathbb{R}^3)$ such that $\mathcal{J}_q(u_n)\to -\infty$ as $n\to \infty$,
then $\|u_{n}\|\to+\infty$. 
Moreover by \eqref{eq:bo}
%Define $A_n=\{x\in \mathbb{R}^3:\ u_{n}(x)\in (\alpha,\beta)\}$. Since $\mathcal{J}_q(u_n)\to -\infty$ as $n\to \infty$, 
we can assume without loss of generality that 
\begin{equation}\label{F1}
\frac{1}{4}\|u_n\|^2<|I| \,{\sl meas}(A_{n}),\quad \forall\, n\in \mathbb N.
\end{equation}
By the result of Strauss \cite{Strau} we know that there exist a positive constant $C$ such that
\begin{equation}\label{stra}
|u(x)|\le C|x|^{-1}\|u\|,\quad \forall\,u\in H^1_r(\mathbb{R}^3).
\end{equation}
Define $\rho_n=\sup\{|x|:\ x\in A_n\}$ and observe from the inequalities \eqref{F1} and \eqref{stra} that, for every $x\in \mathbb R^{3}$
with $|x|=\rho_{n}$ it holds,
\begin{equation*}
0< \alpha\le u_{n}(x)\le C\rho_n^{-1}\|u_n\|\le 2C\rho_n^{-1}(|I| {\sl meas}(A_n))^{1/2}, %,\ \forall\ x\in \mathbb{R}^3,\ |x|=\rho_n,
\end{equation*}
and hence, for some $C'>0$, we deduce
\begin{equation}\label{F2}
C'\rho_n\le {\sl meas}(A_n)^{1/2}.
\end{equation}
 Similar to the deduction of \eqref{F1}, we can assume without loss of generality that
\begin{equation*}
D\|\phi_{u_n}\|_\mathcal{D}^2<|I|{\sl meas}(A_n),\quad \forall\, n\in \mathbb N,
\end{equation*}
and hence, once that the function $(0,\infty)\ni t\mapsto (1-e^{-t/a})/t$ is decreasing we conclude that  
\begin{eqnarray*}
|I| {\sl meas}(A_n)&>&D\|\phi_{u_n}\|_\mathcal{D}^2 \\
          &=&\int\int\frac{1-e^{-\frac{|x-y|}{a}}}{|x-y|}u^2_n(x)u^2_n(y) \\
          &\ge& \int_{A_n}\int_{A_n}\frac{1-e^{-\frac{|x-y|}{a}}}{|x-y|}u^2_n(x)u^2_n(y) \\
          &\ge&\frac{1-e^{-\frac{2\rho_n}{a}}}{2\rho_n}\alpha^4 {\sl meas}(A_n)^2,
\end{eqnarray*}
which implies that 
\begin{equation}\label{F22}
\frac{|I|}{\alpha^{4}}\ge \frac{1-e^{-\frac{2\rho_n}{a}}}{2\rho_n}{\sl meas}(A_n),\quad \forall n\in \mathbb N,
\end{equation}
Observe from \eqref{F2} that $|A_n|\to \infty$ as $n\to \infty$ and from \eqref{F22} that $\rho_n\to \infty$ as $n\to \infty$. Combining \eqref{F2} with \eqref{F22} we obtain that
\begin{equation*}
C''\ge (1-e^{-\frac{2\rho_n}{a}})\rho_n,
\end{equation*}
for some $C''>0$,
which is clearly a contradiction and therefore  
\eqref{iMPS} is proved. %$-\infty<\inf_{u\in H^1_r(\mathbb{R}^3)}\mathcal{J}_q(u)$.

Let us show \eqref{iiMPS}.
%\begin{proposition}\label{PS} If $u_n\in H_r^1(\mathbb{R}^3)$ is a sequence such that $\mathcal{J}'_q(u_n)\to 0$ as $n\to \infty$, then $u_n$ is convergent.	
%\end{proposition}
From the convergence $\mathcal{J}'_q(u_n)\to 0$ as $n\to \infty$, we can assume without loss of generality that 
\begin{equation}\label{Q0}
\mathcal{J}'_q(u_n)[u_n]\le \|u_n\|,\ \forall n\in \mathbb N.
\end{equation}
On the other hand, from  Proposition \ref{ine1} we have that
\begin{equation*}
\|u_n\|_3^3\le \frac{1}{\pi}\left(\frac{1}{\varepsilon^2}\|\nabla u_n\|_2^2+\varepsilon^2\|\phi_{u_n}\|_\mathcal{D}^2\right),
\end{equation*}
where $\varepsilon>0$ is chosen now such that $\frac{q^2}{4\pi}-\frac{\varepsilon^4}{2}>0$.
It follows that, for all $n\in \mathbb N$: 
\begin{eqnarray}\label{Q1}
\mathcal{J}_q'(u_n)[u_n]&=&\|\nabla u_n\|^2+\|u_n\|_2^2+\frac{q^2}{4\pi}\|\phi_{u_n}\|_\mathcal{D}^2-\|u_n\|_p^p \nonumber\\
                      &\ge& \frac{1}{2}\|u_n\|^2+\frac{1}{2}\|u_n\|_2^2+\frac{\pi\varepsilon^2}{2}\|u_n\|_3^3-\frac{\varepsilon^4}{2}\|\phi_{u_n}\|_\mathcal{D}^2+\frac{q^2}{4\pi}\|\phi_{u_n}\|_\mathcal{D}^2-\|u_n\|_p^p \nonumber\\
                      &=& \frac{1}{2}\|u_n\|^2+\left(\frac{q^2}{4\pi}-\frac{\varepsilon^4}{2}\right)	\|\phi_{u_n}\|_\mathcal{D}^2+\frac{1}{2}\|u_n\|_2^2+\frac{\pi\varepsilon^2}{2}\|u_n\|_3^3-\|u_n\|_p^p \nonumber\\
                      &=& \frac{1}{2}\|u_n\|^2+\left(\frac{q^2}{4\pi}-\frac{\varepsilon^4}{2}\right)	\|\phi_{u_n}\|_\mathcal{D}^2+\int g(u_n),
\end{eqnarray}
where $g(t)=t^2/2+\frac{\pi\varepsilon^2}{2}t^3-t^p$ for $t>0$. We combine \eqref{Q0} with \eqref{Q1} to conclude that 
\begin{equation*}
\|u_n\|\ge  \frac{1}{2}\|u_n\|^2+\left(\frac{q^2}{4\pi}-\frac{\varepsilon^4}{2}\right)	\|\phi_{u_n}\|_\mathcal{D}^2+\int g(u_n).
\end{equation*}
We conclude as in the proof of \eqref{iMPS} that $\{u_n\}$ is bounded. Once we know that $\{u_n\}$ is bounded, standard arguments (observe that the analogous of \cite[Lemma 2.1]{Ruiz} is valid) produce a convergent subsequence.
% and without loss of generality we assume that the whole sequence does converge.
\end{proof}

\begin{remark}\label{rem:qn}
Note that \eqref{iiMPS} in the Proposition \ref{MPS} can be extended in the following way:
if $q_{n} \to q$ and  if $\{u_n\}\subset H_r^1(\mathbb{R}^3)$ is a sequence such that $\mathcal{J}'_{q_{n}}(u_n)\to 0$ as $n\to \infty$, then $\{u_n\}$ is convergent, up to subsequences.
This follows due to the smooth dependence of $\mathcal J_{q}'$ on $q$.
\end{remark}

%Now we can prove the Lemma \ref{MPS}:
%\begin{proof}[Proof of the Lemma \ref{MPS}] It follows from the Propositions \ref{min} and \ref{PS}.	
%\end{proof}
\section{Existence of Two radial Solutions} \label{sec:final}
In this section we prove item 2. of Theorem \ref{T1}.
%\begin{proposition}\label{twoso} There exists $\varepsilon>0$ such that for each $q\in (0,q_0^*+\varepsilon)$ the functional $\mathcal{J}_q$ has two critical points $u_q\neq 0$ and $w_q\neq 0$. More specifically,
%$w_q$ is a Mountain Pass critical point and
%	\begin{enumerate}
%		\item[i)] if $q\in (0,q_0^*]$, then $u_q$ is a global minimum, 
%		\item[ii)] if $q\in (q_0^*,q_0^*+\varepsilon)$, then $u_q$ is a local minimum.
%			\end{enumerate}
%		Furthermore
%		\begin{enumerate}
%		\item[iii)] $\mathcal{J}_q(u_q)<0$ if $q\in (0,q_0^*)$, $\mathcal{J}_q(u_{q_0^*})=0$ and $\mathcal{J}_q(u_q)>0$ if $q\in (q_0^*,q_0^*+\varepsilon)$, while $\mathcal{J}_q(w_q)>0$ and $\mathcal{J}_q(w_q)>\mathcal{J}_q(u_q)$ for each $q\in (0,q_0^*+\varepsilon)$.
%	\end{enumerate}
%\end{proposition}

\begin{proposition}\label{beforeq_0} For each $q\in (0,q_0^*)$ there exists a global minimum $u_q$ such that $\mathcal{J}_q(u_q)<0$.
\end{proposition}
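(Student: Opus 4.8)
The plan is to realize $u_q$ as a genuine minimizer of $\mathcal{J}_q$ over the radial space, exploiting that \emph{both} the lower bound and the compactness have already been packaged in Proposition \ref{MPS}. First I would set $m_q:=\inf_{u\in H^1_r(\mathbb{R}^3)}\mathcal{J}_q(u)$ and recall from item \ref{iMPS} of Proposition \ref{MPS} that, for $q\in(0,q_0^*)$, one has $-\infty<m_q<0$. In particular $\mathcal{J}_q$ is bounded from below on $H^1_r(\mathbb{R}^3)$ and the candidate infimum value is strictly negative, which is exactly what the conclusion requires.

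Since $\mathcal{J}_q\in C^1(H^1_r(\mathbb{R}^3))$ and is bounded below, I would then apply Ekeland's variational principle to produce a minimizing sequence $\{u_n\}\subset H^1_r(\mathbb{R}^3)$ that is simultaneously a Palais--Smale sequence, i.e. $\mathcal{J}_q(u_n)\to m_q$ and $\mathcal{J}'_q(u_n)\to 0$ in the dual of $H^1_r(\mathbb{R}^3)$. This is the only genuinely ``new'' ingredient of the argument, and it is completely standard for a $C^1$ functional that is bounded from below.

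At this point the compactness statement in item \ref{iiMPS} of Proposition \ref{MPS} applies verbatim: from $\mathcal{J}'_q(u_n)\to 0$ it follows that $\{u_n\}$ admits a subsequence converging strongly in $H^1_r(\mathbb{R}^3)$ to some $u_q$. By the continuity of $\mathcal{J}_q$ we obtain $\mathcal{J}_q(u_q)=m_q$, so $u_q$ is a global minimum of $\mathcal{J}_q$ on $H^1_r(\mathbb{R}^3)$ with $\mathcal{J}_q(u_q)=m_q<0$. Since $\mathcal{J}_q(0)=0>m_q$, the minimizer is automatically nontrivial, $u_q\neq 0$, which finishes the proof.

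I expect no essential obstacle here, precisely because Proposition \ref{MPS} already carries the two hard analytic facts (the a priori lower bound obtained via the Strauss radial decay estimate \eqref{stra}, and the global Palais--Smale condition). The only point deserving care is the correct invocation of Ekeland to upgrade an arbitrary minimizing sequence into a Palais--Smale one; once this is done, item \ref{iiMPS} delivers strong convergence directly, and no separate verification of boundedness of $\{u_n\}$ is needed, since that boundedness is already embedded in the compactness statement.
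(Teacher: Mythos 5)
Your proposal is correct and is exactly the paper's argument: the paper's proof is the one-line statement ``It follows from Proposition \ref{MPS} and Ekeland's Variational Principle,'' and your write-up simply fills in those same steps (item \eqref{iMPS} for $-\infty<m_q<0$, Ekeland to get a Palais--Smale minimizing sequence, item \eqref{iiMPS} for strong subsequential convergence, and continuity of $\mathcal{J}_q$ to conclude). Nothing is missing; in particular your observation that no separate boundedness check is needed is consistent with how Proposition \ref{MPS}\eqref{iiMPS} is stated in the paper.
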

\begin{proof} It follows from the Proposition \ref{MPS} and Ekeland's Variational Principle.
\end{proof}
Now we prove the existence of a local minimizer for $\mathcal{J}_q$ when $q$ is near $q_0^*$. To do so, we first prove the existence of a global minimizer for the functional $\mathcal{J}_{q_0^*}$.
\begin{corollary}\label{Mq_0} 
There exists a global minimizer $u_{q_0^*}\neq 0$ of $\mathcal J_{q_{0}^{*}}$ 
such that %$\mathcal J_{q_{0}^{*}}'(u_{q_0^*})=0$ and 
$ \mathcal{J}_{q_0^*}(u_{q_0^*})=0$.
\end{corollary}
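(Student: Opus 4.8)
The plan is to realise $u_{q_0^*}$ as the limit of the global minimisers furnished by Proposition \ref{beforeq_0} as the parameter increases to $q_0^*$. First observe that the statement is really about \emph{attainment}: by Corollary \ref{zeroenergylevel} we already have $\mathcal{J}_{q_0^*}(u)\ge 0$ for every $u\in H^1_r(\mathbb{R}^3)$, and since $\mathcal{J}_{q_0^*}(0)=0$ this forces $\inf_{H^1_r(\mathbb{R}^3)}\mathcal{J}_{q_0^*}=0$. Thus the only thing to prove is that this infimum is achieved at a \emph{nontrivial} function.

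To this end I would pick a sequence $q_n\nearrow q_0^*$ with $q_n\in(0,q_0^*)$. For each $n$, Proposition \ref{beforeq_0} provides a global minimiser $u_{q_n}\in H^1_r(\mathbb{R}^3)$ of $\mathcal{J}_{q_n}$ with $\mathcal{J}_{q_n}(u_{q_n})<0$. In particular each $u_{q_n}$ is a nontrivial critical point of $\mathcal{J}_{q_n}$, so $\psi'_{q_n,u_{q_n}}(1)=\mathcal{J}'_{q_n}(u_{q_n})[u_{q_n}]=0$, that is $u_{q_n}\in\mathcal{N}_{q_n}$. Since $q_n\to q_0^*$ and $\mathcal{J}'_{q_n}(u_{q_n})=0\to 0$, Remark \ref{rem:qn} (the $q$-dependent version of \eqref{iiMPS} of Proposition \ref{MPS}) yields, along a subsequence, a strong limit $u_{q_n}\to u_{q_0^*}$ in $H^1_r(\mathbb{R}^3)$; passing to the limit in $\mathcal{J}'_{q_n}(u_{q_n})=0$ and using the smooth dependence of $\mathcal{J}'_q$ on $q$ gives $\mathcal{J}'_{q_0^*}(u_{q_0^*})=0$.

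It remains to identify the energy of $u_{q_0^*}$ and to rule out $u_{q_0^*}=0$. The nontriviality is exactly where the uniform estimate \eqref{eq:bddaway} enters: because $u_{q_n}\in\mathcal{N}_{q_n}$ we have $\|u_{q_n}\|\ge\widetilde C$ for all $n$, and this bound passes to the strong limit, so $\|u_{q_0^*}\|\ge\widetilde C>0$. For the energy, strong convergence together with the joint continuity of $(q,u)\mapsto\mathcal{J}_q(u)$ gives $\mathcal{J}_{q_0^*}(u_{q_0^*})=\lim_n\mathcal{J}_{q_n}(u_{q_n})\le 0$, since every term is negative; combined with $\mathcal{J}_{q_0^*}\ge 0$ from Corollary \ref{zeroenergylevel} this forces $\mathcal{J}_{q_0^*}(u_{q_0^*})=0$. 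As $0=\inf_{H^1_r(\mathbb{R}^3)}\mathcal{J}_{q_0^*}$, the function $u_{q_0^*}$ is the desired nontrivial global minimiser.

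The step I expect to be the crux is the nondegeneracy of the limit, i.e.\ excluding $u_{q_0^*}=0$; this is precisely what the uniform-in-$q$ lower bound \eqref{eq:bddaway} on the Nehari manifolds is designed to supply, and it is the reason one works with the Nehari characterisation rather than directly with an arbitrary minimising sequence for $\mathcal{J}_{q_0^*}$ (which could well converge weakly to $0$). The compactness needed to take limits is not a serious obstacle here because we are confined to the radial space and may invoke Remark \ref{rem:qn}.
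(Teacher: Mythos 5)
Your proposal is correct and follows essentially the same route as the paper's own proof: take $q_n\uparrow q_0^*$, use the global minimisers from Proposition \ref{beforeq_0}, note they lie on the Nehari manifolds so that \eqref{eq:bddaway} gives the uniform lower bound $\|u_{q_n}\|\ge\widetilde C$, invoke Remark \ref{rem:qn} for strong convergence to a nonzero limit, and squeeze the energy between $\limsup_n\mathcal{J}_{q_n}(u_{q_n})\le 0$ and the lower bound $\mathcal{J}_{q_0^*}\ge 0$ from Corollary \ref{zeroenergylevel}. The only differences are expository (you make explicit the joint continuity in $(q,u)$ and the identification of the infimum as $0$, both of which the paper leaves implicit).
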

\begin{proof} Indeed, suppose that $q_n\uparrow q_0^*$ as $n\to \infty$. From the Proposition \ref{beforeq_0}, for each $n$, there exists $u_n:= u_{q_n}$ such that $u_n$ is a global minimum for $\mathcal{J}_{q_n}$ and $\mathcal{J}_{q_n}(u_n)<0$. It follows that $\mathcal{J}'_{q_n}(u_n)=0$ for each $n$ and, 
being all the Nehari manifolds bounded away from zero uniformly in $q$, by \eqref{eq:bddaway} it is
$\|u_n\|\ge \widetilde C$ for each $n$. From \eqref{iiMPS} in Proposition \ref{MPS}
(see Remark \ref{rem:qn}) we conclude that $u_n\to u\neq 0$. By $\mathcal{J}_{q_n}(u_{n})<0$ for each $n$, we conclude that $\mathcal{J}_{q_0^*}(u)\le 0$ and from  Corollary \ref{zeroenergylevel} it follows that $\mathcal{J}_{q_0^*}(u)=0$. Then it is sufficient to set $u_{q_0^*}:= u$ and  the proof is completed.
\end{proof}
\begin{remark}\label{RMK} From the definition of $q_0^*$ and the Corollary \ref{Mq_0} it follows that $q_0(u_{q_0^*})=q_0^*$. Moreover $q_0^*<q(u_{q_0^*})$.
\end{remark}
For $q>0$, define
\begin{equation*}%\label{MINN}
\widehat{\mathcal{J}}_q:=\inf \left\{\mathcal{J}_q(u):\ u\in \mathcal{N}_q^+\cup \mathcal{N}_q^0 \right\}.
\end{equation*}
Observe that 
\begin{equation}\label{eq:infimo}
\widehat{\mathcal{J}}_q=\inf_{u\in H^1_r(\mathbb{R}^3)}\mathcal{J}_q(u)\quad \forall q\in (0,q_0^*]
\end{equation}
 and from  Corollary \ref{zeroenergylevel} there holds  $\widehat{\mathcal{J}}_q\ge 0$ for $q \geq q_0^*$.
\begin{proposition}\label{enerynear} Given $\delta>0$, there exists $\varepsilon>0$ such that for each $q\in (q_0^*,q_0^*+\varepsilon)$ there holds $\widehat{\mathcal{J}}_q<\delta$.
\end{proposition}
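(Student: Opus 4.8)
The plan is to use the global minimiser $u_{q_0^*}$ produced in Corollary \ref{Mq_0} as a single fixed test function and to follow the behaviour of its fiber map $\psi_{q,u_{q_0^*}}$ as the parameter crosses $q_0^*$ from the right. The crucial structural input is Remark \ref{RMK}, which gives the strict inequality $q_0(u_{q_0^*})=q_0^*<q(u_{q_0^*})$: this gap is exactly the room that lets the fiber map remain of type \eqref{i1fibering} (two critical points, a local maximum and a non-degenerate local minimum) for $q$ slightly above $q_0^*$, so that $u_{q_0^*}$ continues to produce an admissible competitor for the infimum $\widehat{\mathcal{J}}_q$.

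Concretely, first I would fix $\varepsilon_0>0$ with $q_0^*+\varepsilon_0<q(u_{q_0^*})$. For every $q\in(q_0^*,q_0^*+\varepsilon_0)$ one then has $q_0(u_{q_0^*})<q<q(u_{q_0^*})$, so by \eqref{i1} of Proposition \ref{fiberingvariation} the fiber $\psi_{q,u_{q_0^*}}$ is of type \eqref{i1fibering} in Proposition \ref{fiberingmaps}, with a local minimum at $t_q^+(u_{q_0^*})$ satisfying $\psi''_{q,u_{q_0^*}}(t_q^+(u_{q_0^*}))>0$. In particular $t_q^+(u_{q_0^*})\,u_{q_0^*}\in\mathcal{N}_q^+$, hence it is admissible for $\widehat{\mathcal{J}}_q$ and
$$\widehat{\mathcal{J}}_q\le \mathcal{J}_q\big(t_q^+(u_{q_0^*})\,u_{q_0^*}\big)=\psi_{q,u_{q_0^*}}\big(t_q^+(u_{q_0^*})\big).$$
To bound this value I would note that, $u_{q_0^*}$ being a critical point of $\mathcal{J}_{q_0^*}$ with $\mathcal{J}_{q_0^*}(u_{q_0^*})=0$, the value $t=1$ is precisely the local minimum $t_{q_0^*}^+(u_{q_0^*})$ and $\psi_{q_0^*,u_{q_0^*}}(1)=0$. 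Since raising $q$ only enlarges the positive quartic term $\tfrac{q^2t^4}{4}\int\phi_{u_{q_0^*}}u_{q_0^*}^2$, the largest positive root of $\psi'_{q,u_{q_0^*}}(t)=0$ moves to the left, i.e. $t_q^+(u_{q_0^*})\le 1$ for $q\ge q_0^*$; as $\psi_{q,u_{q_0^*}}$ is increasing on $[\,t_q^+(u_{q_0^*}),\infty)\ni 1$, this gives
$$\widehat{\mathcal{J}}_q\le \psi_{q,u_{q_0^*}}\big(t_q^+(u_{q_0^*})\big)\le \psi_{q,u_{q_0^*}}(1)=\mathcal{J}_q(u_{q_0^*})=\frac{q^2-(q_0^*)^2}{4}\int\phi_{u_{q_0^*}}u_{q_0^*}^2,$$
where the last equality uses $\mathcal{J}_{q_0^*}(u_{q_0^*})=0$.

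Given $\delta>0$, I would then simply choose $\varepsilon\in(0,\varepsilon_0]$ so small that $\tfrac{(q_0^*+\varepsilon)^2-(q_0^*)^2}{4}\int\phi_{u_{q_0^*}}u_{q_0^*}^2<\delta$, which yields $\widehat{\mathcal{J}}_q<\delta$ for all $q\in(q_0^*,q_0^*+\varepsilon)$, as required. The main obstacle is the middle step, namely controlling the energy at the fiber's local minimum by $\mathcal{J}_q(u_{q_0^*})$: this requires knowing that the minimiser $t_q^+(u_{q_0^*})$ stays at or below $1$ (equivalently, that the local minimum value depends continuously on $q$ across $q_0^*$). Both formulations hinge on the non-degeneracy $\psi''_{q,u_{q_0^*}}(t_q^+(u_{q_0^*}))>0$, i.e. on the strict inequality $q_0^*<q(u_{q_0^*})$ from Remark \ref{RMK}; if one prefers, the inequality $\widehat{\mathcal{J}}_q\le\mathcal{J}_q(u_{q_0^*})$ can instead be obtained from the implicit function theorem applied to $\psi'_{q,u_{q_0^*}}=0$ at $(q_0^*,1)$, giving continuity of $t_q^+(u_{q_0^*})$ and hence of the local minimum value.
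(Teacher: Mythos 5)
Your proof is correct, and its skeleton coincides with the paper's: both take the zero-energy global minimiser $u_{q_0^*}$ of Corollary \ref{Mq_0} as a single fixed test function, use the strict gap $q_0^*=q_0(u_{q_0^*})<q(u_{q_0^*})$ of Remark \ref{RMK} to fix $\varepsilon_1>0$ with $q_0^*+\varepsilon_1<q(u_{q_0^*})$ so that the fiber map $\psi_{q,u_{q_0^*}}$ stays of type \eqref{i1fibering} for $q\in(q_0^*,q_0^*+\varepsilon_1)$, and then use $t_q^+(u_{q_0^*})u_{q_0^*}\in\mathcal{N}_q^+$ as a competitor for $\widehat{\mathcal{J}}_q$. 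Where you genuinely diverge is in the last estimate. The paper invokes the continuity statement $t_q^+(u_{q_0^*})\to 1$ as $q\downarrow q_0^*$ (left unproved there; it rests on the implicit function theorem and the non-degeneracy $\psi''(t_q^+)>0$) to conclude that $\mathcal{J}_q\bigl(t_q^+(u_{q_0^*})u_{q_0^*}\bigr)\to\mathcal{J}_{q_0^*}(u_{q_0^*})=0$. You avoid continuity of $q\mapsto t_q^+$ altogether: since $\psi'_{q,u}(t)/t=\|u\|^2+q^2t^2\int\phi_u u^2-t^{p-2}\|u\|_p^p$ is pointwise increasing in $q$, the negativity intervals $(t_q^-,t_q^+)$ are nested decreasing in $q$, so $t_q^+(u_{q_0^*})\le t_{q_0^*}^+(u_{q_0^*})=1$, and monotonicity of $\psi_{q,u_{q_0^*}}$ on $[t_q^+(u_{q_0^*}),\infty)$ yields the explicit bound $\widehat{\mathcal{J}}_q\le\psi_{q,u_{q_0^*}}(1)=\tfrac{q^2-(q_0^*)^2}{4}\int\phi_{u_{q_0^*}}u_{q_0^*}^2$ (incidentally, this shows $\mathcal{J}_q(t_q^+(u_{q_0^*})u_{q_0^*})\le\mathcal{J}_q(u_{q_0^*})$, which is the corrected direction of the inequality displayed in the paper's proof). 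Your identification $t_{q_0^*}^+(u_{q_0^*})=1$ is the one point requiring care, and your justification is sound: $t=1$ is a critical point of $\psi_{q_0^*,u_{q_0^*}}$ with value $0$, while the local maximum of a type-\eqref{i1fibering} fiber map has strictly positive value, so $t=1$ must be the local minimum. The net effect is that your argument is more elementary and quantitative — it replaces a limit statement by a monotonicity argument and gives an explicit rate for $\widehat{\mathcal{J}}_q$ as $q\downarrow q_0^*$ — whereas the paper's continuity route is shorter to state but relies on an unproven (though true) auxiliary claim.
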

\begin{proof} Indeed, let $u_{q_0^*}\in \mathcal N_{q}^{+}$ be given as in  Corollary \ref{Mq_0}. Observe that if $q\downarrow q_0^*$, then $\mathcal{J}_q(u_{q_0^*})\to \mathcal{J}_{q_0^*}(u_{q_0^*})=0$. Moreover, once $q_0^*<q(u_{q_0^*})$, it follows that there exists $\varepsilon_1>0$ such that $q_0^*+\varepsilon_1<q(u_{q_0^*})$. From  Proposition \ref{fiberingmaps} and \eqref{i2} in Proposition \ref{fiberingvariation}, for each $q\in (q_0^*,q_0^*+\varepsilon_1)$, there exists $t_q^+(u_{q_0^*})$  such that $t_q^+(u_{q_0^*})u_{q_0^*}\in \mathcal{N}_q^+$. Note that $t_q^+(u_{q_0^*})\to 1$ as $q\downarrow q_0^*$ and therefore
	\begin{equation*}
	\mathcal{J}_q(u_{q_0^*})\le \mathcal{J}_q(t_q^+(u_{q_0^*})u_{q_0^*})\to \mathcal{J}_{q_0^*}(u_{q_0^*})=0,\ q\downarrow q_0^*.
	\end{equation*}
If $\varepsilon_2>0$ is choosen in such a way that $\mathcal{J}_q(t_q^+(u_{q_0^*})u_{q_0^*})<\delta$ for each $q\in (q_0^*,q_0^*+\varepsilon_2)$, then we set $\varepsilon=\min\{\varepsilon_1,\varepsilon_2\}$ and the proof is completed.
\end{proof}
Let us recall that by
Proposition \ref{MPGAN}, there exist positive constants $\rho,M$ such that $\mathcal{J}_q(u)\ge M$ for each $\|u\|=\rho$. We can assume without loss of generality that $\rho<\widetilde C$, where $\widetilde C$ is 
such that
$$\text{ for all }q\in \mathbb R, \ u\in \mathcal N_{q} : \|u\|\geq \widetilde C,  $$
(see \eqref{eq:bddaway}).
%given by the Proposition \ref{awayzero}.	

We choose $\delta>0$  in the Proposition \ref{enerynear} in such a way that 
\begin{equation}\label{eq:delta}
\delta<\min\{M,m\},
\end{equation}
where $m$ is the 
positive constant such that, by  Proposition \ref{N0ene},
 $$\forall q\in \mathbb R, u\in \mathcal{N}_q^0 :  \mathcal{J}_q(u)\ge m.$$
Let $\varepsilon>0$ be as in Proposition \ref{enerynear} in  correspondence of the above fixed
$\delta>0$.

\begin{proposition}\label{EKE1} There holds
	\begin{equation*}
	\inf\left\{\mathcal{J}_q(u):\ \|u\|\ge \rho\right\}=\widehat{\mathcal{J}}_q,\quad \forall\, q\in (q_0^*,q_0^*+\varepsilon).
	\end{equation*}
\end{proposition}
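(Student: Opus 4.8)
The statement amounts to the two inequalities $\inf_{\|u\|\ge\rho}\mathcal{J}_q\le\widehat{\mathcal{J}}_q$ and $\inf_{\|u\|\ge\rho}\mathcal{J}_q\ge\widehat{\mathcal{J}}_q$. The first I would dispatch immediately: since $q<q^*$, Corollary \ref{nehari} gives $\mathcal{N}_q^+\neq\emptyset$, so $\mathcal{N}_q^+\cup\mathcal{N}_q^0$ is nonempty, and every $w$ in it lies in $\mathcal{N}_q$, whence $\|w\|\ge\widetilde C>\rho$ by \eqref{eq:bddaway} and our choice $\rho<\widetilde C$. Thus $\mathcal{N}_q^+\cup\mathcal{N}_q^0\subset\{\|u\|\ge\rho\}$, and the infimum of $\mathcal{J}_q$ over the larger set cannot exceed the infimum over the subset, i.e. $\inf_{\|u\|\ge\rho}\mathcal{J}_q\le\widehat{\mathcal{J}}_q$.

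For the reverse inequality I would parametrise rays by unit vectors, writing $\inf_{\|u\|\ge\rho}\mathcal{J}_q(u)=\inf_{\|v\|=1}\inf_{t\ge\rho}\psi_{q,v}(t)$ using $\|tv\|=t$, and then fix a unit vector $v$ and set $g(t):=\psi_{q,v}(t)$. Since $q\neq0$, the positive quartic term forces $g(t)\to+\infty$, so $g$ attains its minimum over the closed half-line $[\rho,\infty)$ at some $t^*\ge\rho$. The proof then splits according to the location of $t^*$. If $t^*=\rho$, the minimiser sits on the sphere $\|\cdot\|=\rho$, and Proposition \ref{MPGAN} yields $g(t^*)=\mathcal{J}_q(\rho v)\ge M$. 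If instead $t^*>\rho$ is an interior minimiser, then $g'(t^*)=0$ and $g''(t^*)\ge0$; using the homogeneity relations $\psi'_{q,t^*v}(1)=t^*\psi'_{q,v}(t^*)$ and $\psi''_{q,t^*v}(1)=(t^*)^2\psi''_{q,v}(t^*)$, this says precisely that $t^*v\in\mathcal{N}_q$ with $\psi''_{q,t^*v}(1)\ge0$, i.e. $t^*v\in\mathcal{N}_q^+\cup\mathcal{N}_q^0$, so $g(t^*)=\mathcal{J}_q(t^*v)\ge\widehat{\mathcal{J}}_q$.

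To conclude I would combine these two cases: in the interior case we already have $g(t^*)\ge\widehat{\mathcal{J}}_q$, while in the boundary case $g(t^*)\ge M$, and Proposition \ref{enerynear} together with the calibration \eqref{eq:delta} gives $\widehat{\mathcal{J}}_q<\delta<M$, so again $g(t^*)\ge\widehat{\mathcal{J}}_q$. Hence $\inf_{t\ge\rho}\psi_{q,v}(t)\ge\widehat{\mathcal{J}}_q$ for every unit $v$, and taking the infimum over $\|v\|=1$ closes the reverse inequality.

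The crux, and the only point where the window $q\in(q_0^*,q_0^*+\varepsilon)$ is actually used, is the strict comparison $\widehat{\mathcal{J}}_q<M$: it is exactly what prevents the boundary sphere $\|\cdot\|=\rho$ from hosting the infimum, and it rests on Proposition \ref{enerynear} producing $\widehat{\mathcal{J}}_q<\delta$ on this $q$-range. Everything else, namely the coercivity of each fibre and the identification of an interior fibre-minimum as a point of $\mathcal{N}_q^+\cup\mathcal{N}_q^0$, is a routine consequence of the fibering analysis already set up in Proposition \ref{fiberingmaps} and the definitions of $\mathcal{N}_q^\pm,\mathcal{N}_q^0$.
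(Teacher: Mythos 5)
Your proof is correct and takes essentially the same route as the paper's: reduce to the fiber maps along rays, bound any interior critical point below by $\widehat{\mathcal{J}}_q$ via its membership in $\mathcal{N}_q^+\cup\mathcal{N}_q^0$, and exclude the boundary sphere $\|u\|=\rho$ using the calibration $M>\delta>\widehat{\mathcal{J}}_q$ coming from Propositions \ref{MPGAN} and \ref{enerynear}. The only cosmetic difference is that you organise the cases by the location of the constrained minimiser (boundary versus interior) instead of by the shape of the fiber map as in Proposition \ref{fiberingmaps}, which if anything renders the argument slightly more airtight.
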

\begin{proof} First observe from the inequality $\rho<\widetilde C$ that $\inf\{\mathcal{J}_q(u):\|u\|\ge \rho\}\le \widehat{\mathcal{J}}_q$. We claim that the equality holds. Indeed, by one hand, if the fiber map $\psi_{q,u}$ satisfies \eqref{i2fibering} or \eqref{i3fibering} of the Proposition \ref{fiberingmaps}, then $\inf_{t>\rho} \psi_{q,u}(t)=M$. On the other hand, if the fiber map $\psi_{q,u}$ satisfies \eqref{i1fibering} of the Proposition \ref{fiberingmaps}, then $\inf_{t>\rho} \psi_{q,u}(t)\ge\widehat{\mathcal{J}}_q $. Once 
	$M>\delta >\widehat{\mathcal{J}}_q$ the proof is completed.
\end{proof}
\begin{corollary}\label{LocM} %There exists $\varepsilon>0$ such that f
For each $q\in (q_0^*,q_0^*+\varepsilon)$ there holds: there exists $u_q\in \mathcal{N}_q^+$ such that $\mathcal{J}_q(u_q)=\widehat{\mathcal{J}}_q$. In particular $\mathcal J_{q}(u_{q})>0$ and  $\|u_q\|\ge \widetilde C>\rho$.
\end{corollary}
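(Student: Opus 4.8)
The plan is to obtain $u_q$ as a minimiser of $\mathcal{J}_q$ over the closed exterior region $E_\rho := \{u \in H^1_r(\mathbb{R}^3) : \|u\| \geq \rho\}$. By Proposition \ref{EKE1} the infimum of $\mathcal{J}_q$ on $E_\rho$ equals $\widehat{\mathcal{J}}_q$, and by Proposition \ref{enerynear} together with the choice \eqref{eq:delta} we have $\widehat{\mathcal{J}}_q < \delta < M$. First I would record the crucial consequence of this gap: since $\mathcal{J}_q \geq M$ on the sphere $\|u\| = \rho$ by Proposition \ref{MPGAN}, any $u \in E_\rho$ with $\mathcal{J}_q(u) < M$ must satisfy $\|u\| > \rho$, i.e. lie in the interior of $E_\rho$. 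Applying Ekeland's Variational Principle on the complete metric space $E_\rho$ then yields a sequence $\{v_n\} \subset E_\rho$ with $\mathcal{J}_q(v_n) \to \widehat{\mathcal{J}}_q$ and $\mathcal{J}_q(v_n) \leq \mathcal{J}_q(w) + \tfrac{1}{n}\|w - v_n\|$ for all $w \in E_\rho$; because $\mathcal{J}_q(v_n) < M$ eventually, each such $v_n$ is interior, and the Ekeland inequality upgrades to $\mathcal{J}_q'(v_n) \to 0$ in the dual of $H^1_r(\mathbb{R}^3)$.

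Next I would feed $\{v_n\}$ into the compactness statement \eqref{iiMPS} of Proposition \ref{MPS}, which gives, up to a subsequence, $v_n \to u_q$ with $\mathcal{J}_q(u_q) = \widehat{\mathcal{J}}_q$ and $\mathcal{J}_q'(u_q) = 0$. The same energy gap forces $\|u_q\| \neq \rho$, hence $\|u_q\| > \rho$ and in particular $u_q \neq 0$; as a nontrivial critical point $u_q$ belongs to $\mathcal{N}_q$, so \eqref{eq:bddaway} sharpens this to $\|u_q\| \geq \widetilde C > \rho$, one of the claimed conclusions.

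Finally I would identify the type of $u_q$ and prove $\mathcal{J}_q(u_q) > 0$. Since $u_q$ is an interior minimiser of $\mathcal{J}_q$ on $E_\rho$, it is a local minimum of the free functional, so along its fibre $t \mapsto \psi_{q,u_q}(t)$ the point $t = 1$ is a local minimum and $\psi''_{q,u_q}(1) \geq 0$; thus $u_q \in \mathcal{N}_q^+ \cup \mathcal{N}_q^0$. The degenerate case is excluded by the second half of the gap \eqref{eq:delta}: on $\mathcal{N}_q^0$ one has $\mathcal{J}_q \geq m > \delta > \widehat{\mathcal{J}}_q = \mathcal{J}_q(u_q)$ by Proposition \ref{N0ene}, so $u_q \notin \mathcal{N}_q^0$ and therefore $u_q \in \mathcal{N}_q^+$. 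For strict positivity I would exploit $q > q_0^* \geq q_0(u_q)$: as $u_q \in \mathcal{N}_q^+$ its fibre is of type \eqref{i1fibering} with $t = 1 = t_q^+(u_q)$ the interior local minimiser, while by \eqref{i2} of Proposition \ref{fiberingvariation} the \emph{strict} inequality $q > q_0(u_q)$ yields $\inf_{t>0}\psi_{q,u_q}(t) = 0$ attained only in the limit $t \to 0^+$; hence $\mathcal{J}_q(u_q) = \psi_{q,u_q}(1) > 0$.

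The heart of the argument, and the step I expect to be most delicate, is the bookkeeping of the three thresholds: the strict chain $\widehat{\mathcal{J}}_q < \delta < \min\{M, m\}$ is precisely what keeps the minimising sequence off the sphere $\|u\| = \rho$ — so that Ekeland produces a genuine free Palais--Smale sequence rather than a merely constrained one — and what simultaneously rules out the degenerate set $\mathcal{N}_q^0$. Everything downstream relies on Proposition \ref{enerynear} having pushed $\widehat{\mathcal{J}}_q$ strictly below both $M$ and $m$, and on the compactness of Proposition \ref{MPS} to convert the resulting Palais--Smale sequence into an actual interior critical point.
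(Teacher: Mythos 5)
Your proof is correct, and it takes a genuinely different (though closely related) route from the paper's. The paper stays on the Nehari decomposition throughout: it takes a minimizing sequence for $\widehat{\mathcal{J}}_q$ in $\mathcal{N}_q^+\cup\mathcal{N}_q^0$, discards $\mathcal{N}_q^0$ at once via Proposition \ref{N0ene} and $\delta<m$, applies Ekeland's principle on $\mathcal{N}_q^+$ to produce a sequence with $\mathcal{J}_q'(u_n)\to 0$ (a step which tacitly requires converting constrained criticality on the manifold into free criticality, cf. Proposition \ref{minnehari}), and then concludes by Proposition \ref{MPS}; membership of the limit in $\mathcal{N}_q^+$ is essentially built in, and positivity follows from $q>q_0^*$ just as in your last step. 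You instead run Ekeland on the exterior set $E_\rho=\{u\in H^1_r(\mathbb{R}^3):\ \|u\|\ge\rho\}$, using Proposition \ref{EKE1} to identify $\inf_{E_\rho}\mathcal{J}_q$ with $\widehat{\mathcal{J}}_q$ and the gap $\widehat{\mathcal{J}}_q<\delta<M$ (Propositions \ref{enerynear} and \ref{MPGAN}) to force the Ekeland points off the sphere $\|u\|=\rho$, so the resulting Palais--Smale sequence is free from the start; membership in $\mathcal{N}_q^+$ is then recovered a posteriori from the local-minimum property $\psi''_{q,u_q}(1)\ge 0$ together with Proposition \ref{N0ene}. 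Your route buys a cleaner Ekeland step --- there is no constrained-to-free conversion to justify, a point the paper passes over quickly --- and it puts Proposition \ref{EKE1} to work, which the paper itself only uses later (Lemma \ref{contidecre}); the price is the small extra argument identifying the type of critical point. Both proofs rest on exactly the same quantitative inputs: the choice $\delta<\min\{M,m\}$ in \eqref{eq:delta} and the compactness of Proposition \ref{MPS}. One cosmetic remark: your phrase that for $q>q_0(u_q)$ the infimum of the fiber map is ``attained only in the limit $t\to 0^+$'' is not literally what Proposition \ref{fiberingvariation}\,\eqref{i2} states; the clean way to finish is to observe that a zero-energy critical point of $\psi_{q,u_q}$ exists only when $q=q_0(u_q)$, so $\mathcal{J}_q(u_q)=0$ (with $\psi'_{q,u_q}(1)=0$) would force $q=q_0(u_q)\le q_0^*<q$, a contradiction --- which is your argument in substance.
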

\begin{proof}
%Choose $\varepsilon_1>0$ in such a way that $m_q>\delta_q$ for each $q\in (q_0^*,q_0^*+\varepsilon_1)$, where %$m_q$ is given by the Proposition \ref{N0ene}. From the Proposition \ref{EKE1} we know that 
%	\begin{equation}\label{TT}
%	\inf\{\mathcal{J}_q(u):\ \|u\|\ge \rho\}=\widehat{\mathcal{J}}_q,\quad \forall\, q\in (q_0^*,q_0^*+\varepsilon_q).
%	\end{equation}
%	Define $\varepsilon=\min\{\varepsilon_1,\varepsilon_q\}$ and observe
%	 From the inequality $M>\delta>\widehat{\mathcal{J}}_q$
	Fix $q\in (q_0^*,q_0^*+\varepsilon)$ and let $\{u_n\}\subset \mathcal N_{q}^{+}\cup \mathcal N_{q}^{0}$ be a minimising sequence for $\widehat{\mathcal J}_{q}<\delta$ by Proposition \ref{enerynear}.  Since $m>\delta$  and, by Proposition \ref{N0ene}, $\mathcal J_{q} (u)\geq m$ on $\mathcal N_{q}^{0}$, we can assume that $\{u_{n}\}\subset \mathcal N_{q}^{+}$ and hence, by the Ekeland's Variational Principle, 
also that $\mathcal{J}'_q(u_n)\to 0$.
	  %Therefore, once \eqref{TT} holds for each $q\in (q_0^*,q_0^*+\varepsilon)$, we can apply the Ekeland's Variational Principle to produce a sequence $\{u_n\}$ such that $\mathcal{J}_q(u_n)\to \widehat{\mathcal{J}}_q$ and $\mathcal{J}'_q(u_n)\to 0$. 
	  We conclude from the Proposition \ref{MPS}  that $u_n\to u$ in $H_r^1(\mathbb{R}^3)$ with $\|u\|\ge \widetilde C> \rho$. 
	Setting $u_q:= u$ clearly we obtain that  $u_q\in \mathcal{N}_q^+$ and $\mathcal{J}_q(u_q)=\widehat{\mathcal{J}}_q$. Due to the definition of $q_0^*$ and the fact that $q>q_0^*$, we conclude that $\mathcal J_{q}(u_{q})>0$.
\end{proof}
We observe two properties of  the function $(0,q_0^*+\varepsilon)\ni q\mapsto\widehat{\mathcal{J}}_q$.
\begin{lemma}\label{contidecre} The function $(0,q_0^*+\varepsilon)\ni q\mapsto\widehat{\mathcal{J}}_q$ is increasing and continuous.
\end{lemma}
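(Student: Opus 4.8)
The plan is to reduce both assertions to a single observation: over the whole interval $(0,q_0^*+\varepsilon)$ the value $\widehat{\mathcal{J}}_q$ can be computed as an infimum over a $q$-independent set. First I would record the representation
$$\widehat{\mathcal{J}}_q=\inf\{\mathcal{J}_q(v): v\in H^1_r(\mathbb{R}^3),\ \|v\|\ge\rho\},\qquad q\in(0,q_0^*+\varepsilon).$$
For $q>q_0^*$ this is exactly Proposition \ref{EKE1}; for $q\le q_0^*$ it follows from \eqref{eq:infimo} together with the fact that the global minimizer produced in Proposition \ref{beforeq_0} and Corollary \ref{Mq_0} is nontrivial with norm $\ge\widetilde C>\rho$, while $\mathcal{J}_q>0$ on $0<\|v\|<\rho$ by Proposition \ref{MPGAN} (shrinking $\rho$ if necessary). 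In every case the infimum is attained at a nontrivial critical point $u_q$ of $\mathcal{J}_q$, with $\mathcal{J}'_q(u_q)=0$ and $\|u_q\|\ge\widetilde C$, by Proposition \ref{beforeq_0}, Corollary \ref{Mq_0} and Corollary \ref{LocM}.

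For monotonicity I would use that, for each fixed $v\neq0$, the map $q\mapsto\mathcal{J}_q(v)=\tfrac12\|v\|^2+\tfrac{q^2}{4}\int\phi_v v^2-\tfrac1p\|v\|_p^p$ is strictly increasing, since $\int\phi_v v^2=\tfrac{1}{4\pi}\|\phi_v\|_{\mathcal D}^2>0$. Given $0<q_1<q_2<q_0^*+\varepsilon$, the minimizer $u_{q_2}$ lies in the feasible set for $q_1$ because $\|u_{q_2}\|\ge\widetilde C>\rho$, so
$$\widehat{\mathcal{J}}_{q_1}\le\mathcal{J}_{q_1}(u_{q_2})<\mathcal{J}_{q_2}(u_{q_2})=\widehat{\mathcal{J}}_{q_2},$$
which gives strict monotonicity.

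For continuity I would split into the two semicontinuity inequalities. Upper semicontinuity is immediate from the representation above, since $\widehat{\mathcal{J}}_{\cdot}$ is a pointwise infimum of the functions $q\mapsto\mathcal{J}_q(v)$, each smooth in $q$; together with monotonicity this already yields right continuity. The main work is lower semicontinuity (equivalently, left continuity). Here I would fix $q$ and a sequence $q_n\to q$ along which $\widehat{\mathcal{J}}_{q_n}$ converges to $\liminf_{p\to q}\widehat{\mathcal{J}}_p$, and take the associated minimizers $u_{q_n}$, which satisfy $\mathcal{J}'_{q_n}(u_{q_n})=0$ and $\|u_{q_n}\|\ge\widetilde C$. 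Since $\mathcal{J}'_{q_n}(u_{q_n})\to0$ and $q_n\to q$, the compactness statement of Remark \ref{rem:qn} provides a subsequence with $u_{q_n}\to u$ in $H^1_r(\mathbb{R}^3)$; passing to the limit in $\mathcal{J}'_{q_n}(u_{q_n})=0$ shows that $u$ is a critical point of $\mathcal{J}_q$ with $\|u\|\ge\widetilde C>\rho$, so $u$ is feasible for $\widehat{\mathcal{J}}_q$. By joint continuity of $(q,v)\mapsto\mathcal{J}_q(v)$,
$$\liminf_{p\to q}\widehat{\mathcal{J}}_p=\lim_n\mathcal{J}_{q_n}(u_{q_n})=\mathcal{J}_q(u)\ge\widehat{\mathcal{J}}_q,$$
which is the desired lower semicontinuity. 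Combined with upper semicontinuity this proves continuity on $(0,q_0^*+\varepsilon)$.

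I expect the delicate point to be exactly this compactness step: one must ensure that the varying-parameter Palais--Smale sequence $\{u_{q_n}\}$ neither escapes to infinity nor degenerates to zero. The first is precisely the radial compactness behind Proposition \ref{MPS}\eqref{iiMPS}, extended to $q_n\to q$ in Remark \ref{rem:qn}, and the nontriviality of the limit is guaranteed by the uniform lower bound \eqref{eq:bddaway}.
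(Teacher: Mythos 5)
Your proposal is correct and takes essentially the same route as the paper: monotonicity comes from evaluating the attained minimizer $u_{q_2}$ at the smaller parameter (using Proposition \ref{EKE1} and \eqref{eq:infimo}, together with $\|u_{q_2}\|\ge\widetilde C>\rho$), and continuity from the compactness of Remark \ref{rem:qn} applied to the sequence of minimizers, exactly as in the paper's left-limit argument, with your upper-semicontinuity observation playing the role of the paper's right-limit computation $\mathcal{J}_{q_n}(u_q)\to\mathcal{J}_q(u_q)$. The only difference is organizational: you merge the two regimes $q\le q_0^*$ and $q>q_0^*$ into a single representation $\widehat{\mathcal{J}}_q=\inf\{\mathcal{J}_q(v):\|v\|\ge\rho\}$ and phrase the limits as semicontinuity, which is a cleaner packaging of the same ingredients rather than a different proof.
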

\begin{proof} %We first prove that $(0,q_0^*+\varepsilon)\ni q\mapsto\widehat{\mathcal{J}}_q$ is decreasing. 
Indeed, suppose that $q<q'$. From  Proposition \ref{beforeq_0}, Corollary \ref{Mq_0} and the Corollary \ref{LocM}
 there exists $u_{q'}$ such that $\widehat{\mathcal{J}}_{q'}=\mathcal{J}_{q'}(u_{q'})$. 
 
 If $q'\in (q_0^*,q_0^*+\varepsilon)$, from the Corollary \ref{LocM} it is $\|u_{q'}\|\ge \widetilde C>\rho$ and hence from the Proposition \ref{EKE1} we obtain 
	\begin{equation*}
	\widehat{\mathcal{J}}_{q}\le \mathcal{J}_{q}(u_{q'})<\mathcal{J}_{q'}(u_{q'})=\widehat{\mathcal{J}}_{q'}.
	\end{equation*}
	If $q'\in (0,q_{0}^{*}]$ the lemma follows by \eqref{eq:infimo}.

\medskip

Now we prove that $(0,q_0^*+\varepsilon)\ni q\mapsto\widehat{\mathcal{J}}_q$ is continuous. In fact, suppose that $q_n\uparrow q\in (0,q_0^*+\varepsilon)$. From  Proposition \ref{beforeq_0}, Corollary \ref{Mq_0} and  Corollary \ref{LocM}, for each $n$, there exists $u_n:= u_{q_n}$ such that 	$\widehat{\mathcal{J}}_{q_n}= \mathcal{J}_{q_n}(u_n)$. Similar to the proof of  Corollary \ref{Mq_0} we may assume that $u_n\to u\neq 0$. 

As before, if $q\in (0,q_{0}^{*}]$ the lemma holds due to   \eqref{eq:infimo}. 

If $q\in (q_0^*,q_0^*+\varepsilon)$ observe from  Corollary \ref{LocM} that $\|u\|>\rho$. We claim that $\widehat{\mathcal{J}}_{q_n}\to \widehat{\mathcal{J}}_q$ as $n\to \infty$. Indeed, once $(0,q_0^*+\varepsilon)\ni q\mapsto\widehat{\mathcal{J}}_q$ is increasing, we can assume that $\widehat{\mathcal{J}}_{q_n}< \widehat{\mathcal{J}}_q$ for each $n$ and $\widehat{\mathcal{J}}_{q_n}\to \mathcal{J}_q(u) \le\widehat{\mathcal{J}}_q$ as $n\to \infty$, which implies that $\mathcal{J}_q(u) =\widehat{\mathcal{J}}_q$.

Now suppose that $q_n\downarrow q\in (0,q_0^*+\varepsilon)$. Once $(0,q_0^*+\varepsilon)\ni q\mapsto\widehat{\mathcal{J}}_q$ is increasing, we can assume that $\widehat{\mathcal{J}}_{q}<\widehat{\mathcal{J}}_{q_{n}}$ and $\widehat{\mathcal{J}}_q\leq \lim_{n\to \infty}\widehat{\mathcal{J}}_{q_n}$. Choose $u_q$ such that $\widehat{\mathcal{J}}_q=\mathcal{J}_q(u_q)$ ($\|u\|>\rho$ in case $q\in (q_0^*,q_0^*+\varepsilon)$) and observe that $\widehat{\mathcal{J}}_q\le \lim_{n\to \infty}\widehat{\mathcal{J}}_{q_n}\le\lim_{n\to \infty}\mathcal{J}_{q_n}(u_q)=\widehat{\mathcal{J}}_q$.
\end{proof}

Now we turn our attention to the second solution.

Let $q\in (0,q_{0}^{*})$.
 As a consequence of the Corollary \ref{zeroenergylevel} we have that $$\Gamma_{q}=\{\gamma\in C([0,1],H_r^1(\mathbb{R}^3)):\ \gamma(0)=0,\ \mathcal{J}_{q}(\gamma(1))<0 \}$$ is non-empty. Define the Mountain Pass level
\begin{equation*}%\label{MPE}
c_{q}:= \inf_{\gamma\in \Gamma_{q}}\max_{t\in [0,1]}\mathcal{J}_{q}(\gamma(t))> 0.
\end{equation*}

By Proposition \ref{MPGAN} and Proposition \ref{MPS} we deduce the following
\begin{proposition}\label{MPG} For each $q\in (0,q_0^*)$ there exists $w_q\in H^1_r(\mathbb{R}^3)\setminus\{0\}$ such that $\mathcal{J}_q(w_q)=c_q$ and $\mathcal{J}'_q(w_q)=0$. In particular $\mathcal J_{q}(w_{q})>\mathcal J_{q}(u_{q})\in(-\infty,0).$
\end{proposition}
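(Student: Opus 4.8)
The plan is to apply the Mountain Pass Theorem to $\mathcal{J}_q$ on the Hilbert space $H^1_r(\mathbb{R}^3)$, drawing the geometry from Proposition \ref{MPGAN} and the compactness from the strong property \eqref{iiMPS} of Proposition \ref{MPS}. Throughout, fix $q\in(0,q_0^*)$.

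First I would verify the Mountain Pass geometry. The class $\Gamma_q$ is nonempty: by Corollary \ref{zeroenergylevel} there is $v\in H^1_r(\mathbb{R}^3)$ with $\mathcal{J}_q(v)<0$, and the straight segment $t\mapsto tv$ lies in $\Gamma_q$. Next I claim $c_q\ge M>0$, with $M,\rho$ as in Proposition \ref{MPGAN}. Indeed, the lower bound $\mathcal{J}_q(u)\ge \tfrac12\|u\|^2-C\|u\|^p$ used there is nonnegative for $\|u\|\le\rho$, since $M>0$ forces $\rho$ to lie in the range where $\tfrac12 s^2-Cs^p\ge0$; hence any endpoint $\gamma(1)$ with $\mathcal{J}_q(\gamma(1))<0$ must satisfy $\|\gamma(1)\|>\rho$. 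As $\|\gamma(0)\|=0<\rho<\|\gamma(1)\|$, every $\gamma\in\Gamma_q$ meets the sphere $\{\|u\|=\rho\}$ by continuity, so $\max_{t}\mathcal{J}_q(\gamma(t))\ge M$, and taking the infimum gives $c_q\ge M>0$.

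I would then invoke the Mountain Pass Theorem in the version that does not presuppose a Palais--Smale condition, obtaining a sequence $\{w_n\}\subset H^1_r(\mathbb{R}^3)$ with $\mathcal{J}_q(w_n)\to c_q$ and $(\mathcal{J}_q|_{H^1_r})'(w_n)\to 0$. Because $\mathcal{J}_q$ is invariant under the orthogonal group, its gradient at a radial function is again radial, so the restricted and full derivatives carry the same norm (symmetric criticality); thus $\mathcal{J}_q'(w_n)\to 0$ in $(H^1(\mathbb{R}^3))^*$. At this stage \eqref{iiMPS} of Proposition \ref{MPS} applies verbatim and furnishes, up to a subsequence, $w_n\to w_q$ in $H^1_r(\mathbb{R}^3)$. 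By continuity of $\mathcal{J}_q$ and of $\mathcal{J}_q'$ we obtain $\mathcal{J}_q(w_q)=c_q$ and $\mathcal{J}_q'(w_q)=0$, and since $c_q>0=\mathcal{J}_q(0)$ necessarily $w_q\neq0$.

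The comparison then follows at once: $\mathcal{J}_q(w_q)=c_q\ge M>0$, whereas $u_q$ is the global minimum of Proposition \ref{beforeq_0} with $\mathcal{J}_q(u_q)=\inf_{H^1_r}\mathcal{J}_q\in(-\infty,0)$ by \eqref{iMPS} of Proposition \ref{MPS}, giving $\mathcal{J}_q(w_q)>\mathcal{J}_q(u_q)\in(-\infty,0)$. The step one normally expects to be delicate is the compactness of the Palais--Smale sequence, which in $\mathbb{R}^3$ typically fails due to translation invariance; here it is already secured by working in the radial subspace and is entirely encapsulated in \eqref{iiMPS}, so the genuine remaining care concerns only the geometric separation $c_q>0$ and the transfer of radial criticality to a bona fide critical point of $\mathcal{J}_q$ via symmetric criticality.
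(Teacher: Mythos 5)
Your proof is correct and takes essentially the same route the paper intends: the paper deduces Proposition \ref{MPG} directly from Proposition \ref{MPGAN} (the uniform local minimum at the origin) and Proposition \ref{MPS} (the strong compactness property), leaving the details implicit, and your argument supplies exactly those details --- nonemptiness of $\Gamma_q$ via Corollary \ref{zeroenergylevel}, the level bound $c_q\ge M>0$ by intersection with the sphere $\|u\|=\rho$, passage from the radial to the full derivative by symmetric criticality, and convergence of the Palais--Smale sequence via item (\ref{iiMPS}). Nothing needs to be corrected.
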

%\begin{proof} Indeed, we combine the Proposition \ref{MPGAN} with the inequality $M_q>0$ to obtain a mountain %pass geometry for the functional $\mathcal{J}_q$. The proof follows from the Proposition \ref{MPS}.
%\end{proof}

Let now $q\in (q_0^*,q_0^*+\varepsilon)$,
where $\varepsilon>0$ is the one fixed in correspondence of $\delta$ in \eqref{eq:delta}. 
%given as in the Corollary \ref{LocM} (\textcolor{red}{ com $q>q_{0}^{*}$}). 
Let  $u_q\in \mathcal{N}_q^+$ (by Proposition \ref{LocM}) such that $\mathcal{J}_q(u_q)=\widehat{\mathcal{J}}_q$. Define
\begin{equation*}%\label{MPA}
d_q=\inf_{\gamma\in \Gamma_q}\max_{t\in [0,1]}\mathcal{J}(\gamma(t)),
\end{equation*}
	where $\Gamma_{q}=\{\gamma\in C([0,1],H_r^1(\mathbb{R}^3)):\ \gamma(0)=0,\ \gamma(1)=u_q \}$.
\begin{proposition}\label{MPCP} For each $q\in (q_0^*,q_0^*+\varepsilon)$ there exists $w_q\in H^1_r(\mathbb{R}^3)\setminus\{0\}$ such that $\mathcal{J}_q(w_q)=d_q$ and $\mathcal{J}'_q(w_q)=0$. In particular $\mathcal J_{q}(w_{q})>\mathcal J_{q}(u_{q})$.
\end{proposition}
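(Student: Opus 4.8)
The plan is to obtain $w_q$ as a Mountain Pass critical point for the geometry with endpoints $0$ and the local minimizer $u_q\in\mathcal N_q^+$, following the classical Ambrosetti--Rabinowitz scheme, and then to recover compactness directly from item \eqref{iiMPS} of Proposition \ref{MPS}. First I would record the two facts that make the geometry work: by Proposition \ref{MPGAN} one has $\mathcal J_q(u)\ge M$ whenever $\|u\|=\rho$, while by Corollary \ref{LocM} the endpoint satisfies $\|u_q\|\ge\widetilde C>\rho$ together with $\mathcal J_q(u_q)=\widehat{\mathcal J}_q$. Recalling that $\delta$ was fixed in \eqref{eq:delta} with $\delta<\min\{M,m\}$ and that $\widehat{\mathcal J}_q<\delta$, we get the chain $M>\delta>\widehat{\mathcal J}_q=\mathcal J_q(u_q)$. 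Since also $\mathcal J_q(0)=0<M$, both endpoints of any admissible path lie strictly below the level $M$.

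Next I would verify the Mountain Pass Geometry. Any $\gamma\in\Gamma_q$ joins $\gamma(0)=0$, with $\|\gamma(0)\|=0<\rho$, to $\gamma(1)=u_q$, with $\|\gamma(1)\|\ge\widetilde C>\rho$; by continuity of $t\mapsto\|\gamma(t)\|$ there is $t_0\in(0,1)$ with $\|\gamma(t_0)\|=\rho$, whence $\max_{t\in[0,1]}\mathcal J_q(\gamma(t))\ge\mathcal J_q(\gamma(t_0))\ge M$. Taking the infimum over $\Gamma_q$ gives $d_q\ge M>0$, and in particular $d_q>\widehat{\mathcal J}_q=\mathcal J_q(u_q)$; thus the sphere $\{\|u\|=\rho\}$ genuinely separates the two endpoints, both of which sit strictly below the pass, which is exactly the separation required by the Mountain Pass Theorem.

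I would then invoke the Mountain Pass Theorem in the form that does not presuppose a compactness condition, producing a Palais--Smale sequence $\{w_n\}\subset H^1_r(\mathbb R^3)$ at level $d_q$, that is $\mathcal J_q(w_n)\to d_q$ and $\mathcal J'_q(w_n)\to 0$. By item \eqref{iiMPS} of Proposition \ref{MPS}, such a sequence admits a subsequence converging to some $w_q\in H^1_r(\mathbb R^3)$; by continuity $\mathcal J_q(w_q)=d_q$ and $\mathcal J'_q(w_q)=0$, so $w_q$ is a critical point. Finally, $\mathcal J_q(w_q)=d_q\ge M>\delta>\widehat{\mathcal J}_q=\mathcal J_q(u_q)$ yields the strict inequality $\mathcal J_q(w_q)>\mathcal J_q(u_q)$, and in particular $w_q\ne 0$ (since $d_q>0$) and $w_q\ne u_q$, so $w_q$ is a genuinely new solution.

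The only genuinely delicate point is compactness, but it has already been isolated in item \eqref{iiMPS} of Proposition \ref{MPS}, which exploits the radial framework and the Strauss decay estimate to force relative compactness of any sequence with $\mathcal J'_q\to 0$; granted this, the remainder is the standard deformation/minimax argument. The rest is bookkeeping of the constants arranged before the statement, namely $\rho<\widetilde C$ and $\delta<\min\{M,m\}$, which are precisely what guarantee that the sphere of radius $\rho$ separates $0$ from $u_q$ and that the Mountain Pass level strictly dominates the local minimum energy $\widehat{\mathcal J}_q$.
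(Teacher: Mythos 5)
Your proof is correct and follows essentially the same route as the paper: the authors likewise combine Proposition \ref{MPGAN} with the chain $M>\delta\ge\widehat{\mathcal{J}}_q=\mathcal{J}_q(u_q)$ to obtain the Mountain Pass Geometry and then conclude via the compactness statement \eqref{iiMPS} of Proposition \ref{MPS}. Your write-up merely makes explicit the intermediate steps (the path crossing the sphere $\|u\|=\rho$, the Palais--Smale sequence at level $d_q$, and the passage to the limit) that the paper leaves implicit.
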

\begin{proof} Indeed, we combine  Proposition \ref{MPGAN} with the inequality $M>\delta\ge \widehat{\mathcal{J}}_q =\mathcal{J}_q(u_q)$ to obtain a Mountain Pass Geometry for the functional $\mathcal{J}_q$. The proof follows from \eqref{iiMPS} in Proposition \ref{MPS}.
\end{proof}

\medskip

Now we conclude the proof of item 2. of Theorem  \ref{T1}.

Let $\varepsilon$ be given as in the Proposition \ref{EKE1}. The existence of  the minimum
$u_q$ follows from Proposition \ref{beforeq_0}, Corollary \ref{Mq_0}, Corollary \ref{LocM}.
The existence of a Mountain Pass critical point $w_{q}$ satisfying $\mathcal J_{q}(w_{q})>\max\{ 0, \mathcal J_{q}(u_{q})\}$ follows by Proposition \ref{MPG} and Proposition \ref{MPCP}.
That $u_{q}$ and $w_{q}$ are actually critical points of $\mathcal J_{q}$ follows by Proposition \ref{minnehari}.

%\bigskip
%
%$,w_q$ for each $q\in (0,q_0^*+\varepsilon)$ and the proof of iii) follows from the  (all fine), , Proposition \ref{contidecre}, Proposition \ref{MPG} and the Proposition \ref{MPCP}. 
%
%The proof of i) follows from the Proposition \ref{beforeq_0}, Corollary \ref{Mq_0}, Proposition \ref{MPG} and the Proposition \ref{MPCP}.
%
%The proof of ii) follows from the Corollary \ref{LocM} and the Proposition \ref{MPCP}. 

%------------------------------------------------------
%------------------------------------------------------

\end{document}